\newtheorem{theorem}{Theorem}[section]
\newtheorem{conjecture}[theorem]{Conjecture}
\newtheorem{corollary}[theorem]{Corollary}
\newtheorem{lemma}[theorem]{Lemma}
\newtheorem{problem}[theorem]{Problem}
\newcommand\DELETE[1]{}
\begin{document}

\title{ Walk-powers and homomorphism bound of planar graphs}%
\author{ 
{\sc Reza Naserasr}$\, ^a$, {\sc Sagnik Sen}$\, ^b$, 
{\sc Qiang Sun\thanks{Corresponding author, supported by CSC. E-mail address: sun@lri.fr(Q. Sun).}}$\, ^{~a}$\\
\mbox{}\\
{\small $(a)$ LRI, CNRS and Universit\'e Paris Sud, F-91405 Orsay Cedex, France}\\
{\small $(b)$ Indian Statistical Institute, Kolkata, India}
}
%
 
\date{\today}
\maketitle

\begin{abstract}
As an extension of the Four-Color Theorem it is conjectured that every planar 
graph of odd-girth at least $2k+1$ admits a homomorphism to 
$PC_{2k}=(\mathbb{Z}_2^{2k}, \{e_1, e_2, \cdots,e_{2k}, J\})$
where $e_i$'s are standard basis and $J$ is all 1 vector. Noting that $PC_{2k}$ itself
is of odd-girth $2k+1$, in this work we show that if the conjecture is true, then
$PC_{2k}$ is an optimal such a graph both with respect to number of vertices
and number of edges. 
The result is obtained using the notion of walk-power of graphs and their
clique numbers. 

An analogous result is proved for bipartite signed planar graphs of unbalanced-girth 
$2k$. The work is presented on a uniform frame work of planar consistent signed graphs.

\end{abstract}



\section{Introudction}

\subsection{Signed graphs, notation}
Given a graph $G$, a \emph{signature} on $G$ is an assignment of signs, $+$
or $-$, to the edges. The set of negative edges is normally denoted by $\Sigma$ 
and will normally be referred to as the signature. A \emph{re-signing} is to change
the signs of all edges incident to a given set of vertices or, equivalently,
edges of an edge-cut. Two signatures are said to be equivalent if one
can be obtained from the other by a re-signing. A graph $G$ together with a set of signatures equivalent to 
$\Sigma$ is called a signed graph and is denoted by $[G, \Sigma]$ where $\Sigma$ is any member of the class
of equivalent signatures. A signed cycle with an even (odd) number 
of negative edges is called \emph{balanced} (\emph{unbalanced}). It is easily observed that the balance
of a cycle is invariant of re-signing. The \emph{unbalanced-girth} of a signed graph is the shortest
length of its unbalanced cycles. A \emph{consistent} signed graph is a signed graph in which every balanced
cycle is of even length and all unbalanced cycles are of a same parity. Thus there are two types of consistent signed
graphs: 
\begin{itemize}
 \item[i.] when all unbalanced cycles are of odd length, it can be shown that this is 
 the case if and only if 
$\Sigma \equiv E(G)$, such a signed graph will be called an \emph{odd signed graph}; 
\item[ii.] 
 when all unbalanced cycles are of even length, which will be the case if and only if $G$ is bipartite,
such a signed graph thus will be referred to as a \emph{singed bipartite graph}.
\end{itemize}

\subsection{Homomorphisms and bounds}
Given two signed graphs $[G, \Sigma]$ and $[H, \Sigma']$ we say there is 
a \emph{homomorphism} of $[G, \Sigma]$
to $[H, \Sigma']$, denoted $[G, \Sigma] \to [H, \Sigma']$, if there is 
a signature $\Sigma_1$ of $G$ equivalent to
$\Sigma$ and a mapping $\varphi: V(G)\to V(H)$ such that $xy \in E(G)$ 
implies $\varphi(x)\varphi(y)\in E(H)$ and 
$xy\in \Sigma_1$ if and only if $\varphi(x)\varphi(y)\in \Sigma'$. 
It is easy to prove that if 
$[G, \Sigma] \to [H, \Sigma']$, then unbalanced-girth of $[G, \Sigma]$ is 
at least as the unbalanced-girth of $[H, \Sigma']$.
Give a class $\mathcal C$ of (signed) graphs we say a (signed) graph $B$ 
\emph{bounds} $\mathcal{C}$ if every member of
$\mathcal{C}$ admits a homomorphism to $B$. For more on this subject we refer
to \cite{NRS14}.

\subsection{Signed Projective cubes}
\emph{Projective cube} of dimension $d$, denoted $\mathcal{PC}_d$, is the Cayley graph 
$(\mathbb{Z}_2^d, \{ e_1, e_2, \cdots e_d, J \})$ where $e_i$'s are the standard basis and $J$ is the all 1 vector
of relevant length ($d$ here). It is obtained by identifying antipodal vertices of the hypercube of dimension $d+1$ 
or, equivalently, by adding edges between pairs of antipodal vertices of hypercube of dimension $d$.
We define \emph{singed projective cube} of dimension $d$, denoted $\mathcal{SPC}_d$,
to be the singed graph obtained from $\mathcal{PC}_d$ by assigning $+$ to each edge corresponding to an $e_i$ and
$-$ to edges corresponding to $J$.

Projective cubes, also known as folded cubes, are well-studied graphs. We refer to \cite{NRS14} and references there for
some properties of signed projective cubes and for a proof of the following two theorems:

\begin{theorem}
 Signed projective cube of dimension $d$ is a consistent signed graph and has unbalanced-girth $d+1$. 
\end{theorem}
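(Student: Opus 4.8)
The plan is to work directly with the Cayley-graph structure of $\mathcal{SPC}_d = (\mathbb{Z}_2^d, \{e_1,\dots,e_d,J\})$ and to use the parity of walks. First I would observe that a closed walk in a Cayley graph $(\mathbb{Z}_2^d, S)$ corresponds to a sequence $s_1, s_2, \dots, s_\ell$ of generators (with repetition allowed) whose sum in $\mathbb{Z}_2^d$ is $0$; its length is $\ell$ and, under the signature that makes $J$ negative and each $e_i$ positive, the number of negative edges traversed is exactly the number of indices $i$ with $s_i = J$. Hence the cycle (or closed walk) is unbalanced precisely when $J$ appears an odd number of times in this sequence. The key arithmetic fact is that if $J$ occurs $m$ times and the $e_i$'s occur a total of $\ell - m$ times, then summing to zero forces $\sum (\text{coordinate contributions}) = 0$; projecting onto any single coordinate, each $e_i$ contributes to exactly one coordinate and $J$ contributes to all of them, so $mJ + \sum_{\text{chosen }e_i} e_i = 0$ means that for every coordinate $j$ the number of times coordinate $j$ is ``hit'' (by $J$, or by $e_j$) is even, i.e. $m + (\#\text{ of }e_j\text{'s}) \equiv 0 \pmod 2$ for each $j$.

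From this I would extract the two statements of the theorem. For the parity/consistency claim: summing the $d$ congruences $m + (\#e_j) \equiv 0$ over all $j$ gives $dm + (\ell - m) \equiv 0$, i.e. $\ell \equiv m(1-d) \equiv m(1+d) \pmod 2$. Therefore if $m$ is even (balanced closed walk) then $\ell$ is even, and if $m$ is odd (unbalanced closed walk) then $\ell \equiv d+1 \pmod 2$ — so all unbalanced cycles have the same parity as $d+1$, and all balanced cycles are even; this is exactly the definition of a consistent signed graph, with the split into ``odd signed graph'' ($d$ even, so $\Sigma \equiv E$) versus ``signed bipartite graph'' ($d$ odd, so $\mathcal{PC}_d$ bipartite) matching the dichotomy in the introduction. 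For the unbalanced-girth claim: an unbalanced cycle has $m$ odd, hence $m \ge 1$; if $m = 1$ the congruences force each $e_j$ to appear an odd number of times, hence at least once, so $\ell \ge 1 + d = d+1$; if $m \ge 3$ then trivially $\ell \ge m \ge 3$, and a short case check (using $m$ odd and $\ell \equiv d+1$) rules out lengths below $d+1$ — or more cleanly, one notes $\ell \ge m + \#\{j : \#e_j \text{ is odd, hence}\ge 1\}$, and since $m$ odd makes $m + \#e_j$ even, $\#e_j$ is odd exactly when… here a tiny bit of care is needed, so I would instead just bound $\ell \ge m + (d - \#\{j: \#e_j \text{ even}\})$ and argue the extremal configuration is $m=1$. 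Finally, to see the bound $d+1$ is attained, exhibit the cycle $e_1, e_2, \dots, e_d, J$: its generators sum to $J + J = 0$, it has length $d+1$, and $J$ appears once, so it is unbalanced.

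The main obstacle is the lower-bound direction when $m \ge 3$: one must be sure that no unbalanced cycle of length strictly between (roughly) $3$ and $d+1$ can sneak in. The parity constraint $\ell \equiv d+1 \pmod 2$ already kills many cases, and the coordinate-counting inequality $\ell \ge m + \#\{j : \#e_j \ge 1\}$ handles the rest once one checks that making $m$ large only increases $\ell$ relative to the number of ``free'' coordinates; I expect this to be a short but slightly fiddly parity argument. Everything else (the walk-to-generator-sequence dictionary, the coordinatewise parity identity, and the explicit extremal cycle) is routine. I would present the coordinate-counting lemma first as a self-contained statement about $\mathbb{Z}_2^d$, then derive both halves of the theorem as corollaries, since the same identity is exactly what later sections will need for the walk-power and clique-number computations.
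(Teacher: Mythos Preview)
The paper does not actually prove this theorem; it explicitly defers to \cite{NRS14} for the argument, so there is no in-paper proof to compare against. Your direct Cayley-graph approach via generator sequences is the natural one and is essentially correct.

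Your consistency argument (summing the coordinatewise congruences to get $\ell \equiv m(d+1) \pmod 2$) is clean and complete. For the unbalanced-girth lower bound, however, you have manufactured a difficulty that is not there. The congruence you already derived, $m + c_j \equiv 0 \pmod 2$ for every coordinate $j$ (where $c_j$ is the number of occurrences of $e_j$), is valid for \emph{every} $m$, not only $m=1$. Hence whenever $m$ is odd, each $c_j$ is odd and so $c_j \ge 1$ for all $j$, which gives at once
\[
\ell \;=\; m + \sum_{j=1}^{d} c_j \;\ge\; m + d \;\ge\; d+1.
\]
There is no separate ``$m \ge 3$'' case, no fiddly parity check, and no extremal-configuration argument required: your own identity already handles every odd $m$ uniformly. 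The ``main obstacle'' you flag in your last paragraph dissolves the moment you apply your congruence for general odd $m$ rather than only for $m=1$. With that one-line observation your proposal is a complete proof; the explicit cycle $e_1,e_2,\dots,e_d,J$ (which visits $d+1$ distinct vertices) witnesses equality.
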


It follows that if a signed graph admits a homomorphism to a signed projective cube, it must be a consistent signed graph.
Such a mapping then becomes equivalent to a packing problem as the following theorem claims:

\begin{theorem}
 A consistent signed graph $(G,\Sigma)$ admits a homomorphism 
 to $\mathcal{SPC}_d$ if and only if the edges set of $G$ 
 can be partitioned into $d+1$ disjoint sets each of which induces
 a signature equivalent to $\Sigma$.
\end{theorem}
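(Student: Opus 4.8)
The plan is to translate the homomorphism into an edge-coloring of $G$ by the generators of $\mathcal{SPC}_d$ and back, using throughout the classical fact that two signatures on $G$ are equivalent if and only if their symmetric difference is an edge cut, equivalently meets every cycle in an even number of edges.

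For the forward direction, suppose $\varphi$ is a homomorphism of $(G,\Sigma)$ to $\mathcal{SPC}_d$, realized on a signature $\Sigma_1\equiv\Sigma$ with $\varphi\colon V(G)\to\mathbb{Z}_2^d$. Since adjacent vertices of $\mathcal{SPC}_d$ differ by an element of $\{e_1,\dots,e_d,J\}$, I would color each edge $xy$ of $G$ by $c(xy)=\varphi(x)+\varphi(y)$, obtaining a partition of $E(G)$ into classes $E_1,\dots,E_d,E_J$ (some possibly empty). The sign-preservation condition says the negative edges are exactly those sent to a $J$-edge, so $E_J=\Sigma_1\equiv\Sigma$. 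It remains to show $E_i\equiv\Sigma$ for every $i$. For any cycle $C$ the sum of $c$ around $C$ telescopes to $0$ in $\mathbb{Z}_2^d$; writing this out and using $J=e_1+\dots+e_d$ together with the fact that $e_1,\dots,e_d$ is a basis gives $|E_i\cap C|\equiv|E_J\cap C|\pmod 2$ for every $i$. Hence $E_i\triangle E_J$ meets every cycle evenly, so it is an edge cut and $E_i\equiv E_J\equiv\Sigma$. If some $E_i$ is empty, this same congruence forces $E_J$, hence $\Sigma$, to be balanced, so the empty set is again a signature equivalent to $\Sigma$.

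For the converse, suppose $E(G)=E_1\cup\dots\cup E_{d+1}$ with each $E_j$ equivalent to $\Sigma$; relabel so that $E_{d+1}$ will play the role of the $J$-class, and first resign $G$ so that $\Sigma_1=E_{d+1}$. I would build $\varphi$ coordinate by coordinate: for $i=1,\dots,d$, since $E_i\equiv E_{d+1}$ the set $E_i\triangle E_{d+1}=E_i\cup E_{d+1}$ is an edge cut, so there is a $2$-coloring $\varphi_i\colon V(G)\to\mathbb{Z}_2$ whose bichromatic edges are exactly $E_i\cup E_{d+1}$. Put $\varphi=(\varphi_1,\dots,\varphi_d)$. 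Then an edge in $E_j$ with $j\le d$ is bichromatic in coordinate $j$ only, so $\varphi(x)+\varphi(y)=e_j$, while an edge in $E_{d+1}$ is bichromatic in every coordinate, so $\varphi(x)+\varphi(y)=J$. Thus $\varphi$ sends edges of $G$ to edges of $\mathcal{SPC}_d$, positive edges to $e_i$-edges and the negative edges $\Sigma_1=E_{d+1}$ to $J$-edges, so it is the required homomorphism of $(G,\Sigma)$ to $\mathcal{SPC}_d$.

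The only genuinely routine ingredients are the telescoping sum and the standard correspondence between edge cuts and $2$-colorings of $V(G)$; the points to keep an eye on are the bookkeeping with empty color classes (relevant only when $\Sigma$ is balanced) and, for disconnected $G$, reading ``edge cut'' as ``element of the cut space'' and performing the $2$-coloring component by component. I do not anticipate a real obstacle here: the whole argument is the linear algebra of the single relation $e_1+\dots+e_d+J=0$ in $\mathbb{Z}_2^d$, which is precisely what corresponds to ``a signature equivalent to $\Sigma$'' on both sides.
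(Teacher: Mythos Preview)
Your argument is correct and is the standard proof: the telescoping identity $\sum_{xy\in C}(\varphi(x)+\varphi(y))=0$ in $\mathbb{Z}_2^d$ together with the single relation $e_1+\cdots+e_d+J=0$ is exactly what forces each colour class to be equivalent to $\Sigma$, and the converse via coordinatewise $2$-colourings is the natural inverse construction. Note, however, that the paper does not actually prove this theorem; it is quoted from \cite{NRS14}, so there is no in-paper proof to compare against---your write-up is in fact more detailed than anything appearing here.
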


The following conjecture, introduced in \cite{N07} and \cite{G05} 
(also see \cite{NRS13}) is the focus of this work:

\begin{conjecture}\label{PlanarsToProjectiveCubes}
 Given $d\geq 2$, every planar consistent signed graph of unbalanced-girth 
 $d+1$ admits a homomorphism to $\mathcal{SPC}_d$.
\end{conjecture}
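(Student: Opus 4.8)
The plan is to convert the homomorphism question into a flow problem by combining Theorem 1.2 with planar duality, and then to attack the resulting statement by the minimal-counterexample and discharging machinery that underlies the Four-Color Theorem, which is exactly the $d=2$ instance of the conjecture.

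First I would use Theorem 1.2 to replace the assertion $[G,\Sigma]\to\mathcal{SPC}_d$ by the equivalent packing statement: $E(G)$ partitions into $d+1$ classes, each inducing a signature equivalent to $\Sigma$. Viewing the generators $\{e_1,\dots,e_d,J\}$ as $d+1$ elements of $\mathbb{Z}_2^d$ subject to the single relation $e_1+\cdots+e_d+J=0$, such a packing is precisely a $\mathbb{Z}_2^d$-valued tension on $G$ that sends each edge to a generator; the symmetry group of the generator set (which acts as $S_{d+1}$) guarantees that every one of the $d+1$ value-classes is a signature equivalent to $\Sigma$. For a plane graph this tension dualizes to a nowhere-zero $\mathbb{Z}_2^d$-flow on the dual $G^*$ whose values lie in $\{e_1,\dots,e_d,J\}$, and the hypothesis ``unbalanced-girth $\ge d+1$'' becomes a lower bound on the sizes of the relevant edge-cuts of $G^*$. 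I would carry the two consistent types (odd signed graphs and signed bipartite graphs) through this same resigning-invariant formulation, so that the argument is uniform.

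Next I would pass to a minimal counterexample $[G,\Sigma]$, minimising $|V(G)|+|E(G)|$, and establish the standard structural reductions: $G$ is $2$-connected, has no small separating cycle, and --- using the unbalanced-girth bound together with Euler's formula --- cannot have all of its faces large. The heart of the proof would then be a catalogue of \emph{reducible configurations}: local subgraphs (equivalently, local flow patterns on $G^*$) whose presence lets one extend any homomorphism of the remainder, contradicting minimality. A discharging argument, distributing charge by Euler's formula and redistributing it along the girth constraint, would be used to show that such a configuration is unavoidable.

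The main obstacle is precisely this final step. For $d=2$ the statement is the Four-Color Theorem, whose only known proofs already require a computer-verified unavoidable set of hundreds of reducible configurations; and for general $d$ the dual flow formulation is equivalent, in the odd case, to Seymour's still-open conjecture that every planar $r$-graph is $r$-edge-colorable, with $r=d+1$. Consequently no finite unavoidable set of reducible configurations is known for $d\ge 3$, and even the right discharging rules are unclear, because controlling how a partial $\mathbb{Z}_2^d$-tension extends across a configuration depends delicately on the additive structure of $\mathbb{Z}_2^d$ and on the face-telescoping constraints. A realistic staged target would be to first settle restricted classes --- bounded tree-width, outerplanar, or $K_5$-minor-free instances --- where extendability can be checked directly, before attempting the planar case in full.
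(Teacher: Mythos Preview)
The statement you are trying to prove is labelled a \emph{Conjecture} in the paper, and the paper does not prove it; there is therefore no proof to compare your proposal against. The paper's own contribution is the complementary direction: it shows, via the walk-power construction (Theorems~\ref{oddcase} and its signed-bipartite analogue), that \emph{if} Conjecture~\ref{PlanarsToProjectiveCubes} holds then $\mathcal{SPC}_d$ is optimal in both vertex and edge count. In the concluding remarks the paper records that the conjecture is currently verified only for $d\le 7$, and this is obtained exactly through the equivalence with Seymour's conjecture on edge-colouring $r$-regular planar multigraphs (with $r=d+1$), proved case by case in \cite{G12,DKK10,E11,CES12}.

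Your plan---reduce via Theorem~1.2 to a $\mathbb{Z}_2^d$-tension/flow problem, dualise, and then run a minimal-counterexample plus discharging argument---is the standard reformulation, and you are right that the $d=2$ instance is the Four-Color Theorem and that the odd-$d$ dual statement is Seymour's conjecture. But precisely because of this equivalence, your proposal is not a proof: as you yourself note, no unavoidable set of reducible configurations is known for general $d$, and the discharging step is genuinely open. So the proposal is an accurate description of the known line of attack and of why it stalls, but it does not supply the missing idea; at present the conjecture remains open beyond $d\le 7$.
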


The conjecture is formed of two parts: for even values of $d$ 
(by considering the signature in which all edges are negative) it 
claims that every planar graph of odd-girth
at least $d+1$ admits a homomorphism to $\mathcal{PC}_d$. 
For odd values of $d$ it says that every planar signed bipartite
graph of unbalanced-girth at least $d+1$ admits a homomorphism to 
$\mathcal{SPC}_d$. Since $\mathcal{PC}_2$ is isomorphic to $K_4$,
the very first case of this conjecture is the Four-Color Theorem.

\subsection{Question of Ne\v set\v ril}
This conjecture, for odd values of $d$ was introduced in \cite{N07} in relation 
to a question of J.~Ne\v set\v ril who asked if there is a triangle-free graph 
to which every triangle-free planar graph admits a homomorphism. This question 
was answered in a larger frame work by P. Ossona de Mendez and 
J.~Ne\v set\v ril which is stated based on the following notation.

Given a finite set $\mathcal H$ of connected graphs we use $Forb_h(\mathcal
H)$ to denote the class of all graphs which do not admit a homomorphism from
any member of $\mathcal H$. Similarly, given a set $\mathcal M$ of graphs we
use $Forb_m(\mathcal M)$ to denote the class of all graphs that have no member
of $\mathcal M$ as a minor.

\begin{theorem}\cite{NO08}\label{NOduality}
Given a finite set $\mathcal M$ of graphs and a finite set $\mathcal H$ of 
connected graphs, there is graph in $Forb_h(\mathcal H)$ to which every graph 
in $Forb_h(\mathcal H) \cap Forb_m(\mathcal M)$ admits a homomorphism.
\end{theorem}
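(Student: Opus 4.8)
The plan is to reproduce, at the level of a sketch, the argument of \cite{NO08}, whose engine is the existence of \emph{low tree-depth decompositions} in classes with bounded expansion; these reduce the problem to the combinatorially controlled setting of graphs of bounded tree-depth.

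\textbf{Step 1: from excluded minors to low tree-depth colorings.} We may assume $\mathcal K := Forb_m(\mathcal M)$ is a proper minor-closed class. Excluding a fixed minor bounds the edge density of every shallow minor, so $\mathcal K$ has bounded expansion; consequently, for every integer $p$ there is an integer $N = N(\mathcal K, p)$ such that every $G \in \mathcal K$ admits a vertex-coloring $c_G : V(G) \to \{1,\dots,N\}$ with the property that for each $I \subseteq \{1,\dots,N\}$ with $|I| \le p$ the subgraph $G[c_G^{-1}(I)]$ has tree-depth at most $p$. We take this decomposition theorem of Ne\v set\v ril and Ossona de Mendez as a black box, fix $p := \max_{H \in \mathcal H} |V(H)|$, and let $N$ be as above.

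\textbf{Step 2: a universal $\mathcal H$-free graph of bounded tree-depth.} The core of the proof is the claim that for every $q$ there is a graph $Y_q \in Forb_h(\mathcal H)$ of tree-depth at most $q$ to which every graph of tree-depth at most $q$ lying in $Forb_h(\mathcal H)$ admits a homomorphism. This is proved by induction on $q$: for $q = 1$ one may take $Y_1 = K_1$, since every member of $\mathcal H$ is connected with at least one edge (the degenerate case where some $H \in \mathcal H$ is a single vertex being trivial). For the inductive step one decomposes a connected graph of tree-depth $\le q+1$ as a root $r$ together with the components of $G - r$, each of tree-depth $\le q$, with $r$ joined to an arbitrary subset of each component. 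I expect this step to be the main obstacle: the homomorphisms of the pieces into $Y_q$ must be rendered compatible with these prescribed subsets, which forces the induction to be carried on \emph{rooted} graphs and $Y_q$ to be replaced at each level by an appropriate power recording the relevant boundary information. Throughout, one keeps the target graphs inside $Forb_h(\mathcal H)$: a homomorphism of a connected $H \in \mathcal H$ into such a target has small connected image and can be projected back to a homomorphism of $H$ into $Y_q$, which does not exist.

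\textbf{Step 3: assembling the bound.} Put $Y := Y_p$, let $m$ be the number of subsets $I \subseteq \{1,\dots,N\}$ with $|I| \le p$, and let $B$ have vertex set $\{1,\dots,N\} \times V(Y)^{m}$, a vertex being a pair $(a,(y_I)_I)$ consisting of a color $a$ and a family of vertices of $Y$ indexed by those subsets $I$. Declare $(a,(y_I)_I)$ and $(b,(z_I)_I)$ adjacent in $B$ precisely when $a \ne b$ and $y_I z_I \in E(Y)$ for every $I$ with $\{a,b\} \subseteq I$. Given $G \in \mathcal K \cap Forb_h(\mathcal H)$, equip it with the coloring $c_G$ of Step 1; for each admissible $I$ the graph $G[c_G^{-1}(I)]$ has tree-depth $\le p$ and lies in $Forb_h(\mathcal H)$, so by Step 2 it maps to $Y$, and we extend such a map arbitrarily to $\varphi_I : V(G) \to V(Y)$. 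Then $v \mapsto (c_G(v),(\varphi_I(v))_I)$ is a homomorphism $G \to B$, because an edge $uv$ of $G$ joins vertices of different colors and for every $I$ containing both $c_G(u)$ and $c_G(v)$ the vertices $\varphi_I(u),\varphi_I(v)$ are adjacent in $Y$. Finally $B \in Forb_h(\mathcal H)$: were $H \in \mathcal H$ to map to $B$, then as $H$ is connected with $|V(H)| \le p$ the set $I^\ast$ of colors appearing in its image satisfies $|I^\ast| \le p$, and projecting onto the $I^\ast$-coordinate converts the map into a homomorphism $H \to Y$, contradicting $Y \in Forb_h(\mathcal H)$. Hence $B$ is a graph in $Forb_h(\mathcal H)$ bounding $Forb_h(\mathcal H) \cap Forb_m(\mathcal M)$, as required.
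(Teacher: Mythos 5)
This statement is quoted by the paper from \cite{NO08}; the paper itself gives no proof of it, so your sketch can only be measured against the argument in that reference, and your outline does follow its general route (bounded expansion of a proper minor-closed class, low tree-depth decompositions, a universal target for the bounded tree-depth pieces, then a product-type assembly). Step 3 is essentially correct as written, with one technical slip: you only required that each $G[c_G^{-1}(I)]$, $|I|\le p$, have tree-depth at most $p$, which does not force $c_G$ to be a proper coloring, while your definition of $B$ demands $a\ne b$ on edges. You need the standard, stronger form of the decomposition theorem (any $i\le p$ classes induce a graph of tree-depth at most $i$, so singleton classes are independent sets), or else drop the condition $a\ne b$ from the definition of $B$ (no loops arise since $Y$ is loopless, and the projection argument is unaffected).

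The genuine gap is Step 2, which is the heart of the theorem and which you state as a claim, explicitly flagging the inductive compatibility problem as ``the main obstacle'' without resolving it. The naive induction on tree-depth does fail exactly where you say it does: a homomorphism of $G-r$ into $Y_q$ need not be extendable to the root, because the root is attached to an arbitrary subset of each component, so one must carry boundary information, and it is not shown that this can be done while keeping the target finite, $\mathcal H$-free, and universal. The way this is actually handled in the literature is not by that induction but by the theorem of Ne\v{s}et\v{r}il and Ossona de Mendez that graphs of tree-depth at most $q$ have cores of bounded order, hence there are only finitely many such cores up to isomorphism; one then takes $Y_q$ to be the disjoint union of all cores of tree-depth at most $q$ lying in $Forb_h(\mathcal H)$ (this union is $\mathcal H$-free because each $H\in\mathcal H$ is connected, and every tree-depth-$\le q$ graph in $Forb_h(\mathcal H)$ retracts to one of these cores). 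Note also that the tree-depth bound on $Y_q$ that you ask for is never used in Step 3, so the claim you actually need is weaker; but as it stands, your proposal asserts rather than proves the one ingredient that does not follow from routine verification, so the argument is incomplete until that finiteness result (or an equivalent) is invoked or proved.
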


\subsection{In this paper}
The bound that are build using known proofs of this theorem are super exponential.
To find the optimal bound in this theorem, in general, is a very difficult question. 
Indeed this question, in particular, contains the Hadwiger's conjecture simply by 
taking $\mathcal M=\mathcal H=\{K_n\}$. Conjecture~\ref{PlanarsToProjectiveCubes}
proposes a smaller bound for the case of $M=\{K_5, K_{3,3}\}$ and $H=\{C_{2k-1}\}$. 
For $k=1$, ($C_1$ being a loop), since $K_4$ is a planar graph, it is the optimal 
answer by the Four-Color Theorem. For $k=2$, it is proved in \cite{N13} that $PC(4)$,
known as the Clebsch graph, is the optimal bound. Here we prove that any bound of 
odd-girth $2k+1$ for planar graphs of odd-girth $2k+1$ has to have at least $2^{2k}$ 
vertcies each of degree at least $2k+1$. This would imply that if 
Conjecture~\ref{PlanarsToProjectiveCubes} holds, then  $PC(2k)$ is an optimal bound.
We prove an analogue result for the case of planar signed bipartite graphs, 
even though analogue of Theorem~\ref{NOduality} for signed bipartite graphs is not 
proved yet.

\section{Optimal bound for planar odd signed graphs}

In this section we consider the first part of 
Conjecture~\ref{PlanarsToProjectiveCubes}. This case deals with odd signed graphs in
which case one can assume all the edges are negative. 
Thus homomorphism problem here is simply a homomorphism of graphs.

To prove our result, in fact we prove a stronger claim in the following sense. 
Given a graph $G$ and a positive integer $k$
we define the $k$-th walk-power of $G$, denoted by $G^{(k)}$, to be a graph whose vertex set is also $V(G)$ with two 
vertcies $x$ and $y$ being adjacent if there is a walk of length $k$ connecting $x$ and $y$ in $G$. 
This graph would be loopless only if $k$ is odd and $G$ has odd-girth at least $k+2$, thus this will be the only case of
interest for us in this work. If $\phi$ is a homomorphism of $G$ to $H$, then it can easily be checked that $\phi$ is also 
a homomorphism of $G^{(k)}$ to $H^{(k)}$. Thus to prove our claim we will prove the following stronger result.

\begin{theorem}\label{oddcase}
 There is a planar graph $G$ of odd-girth $2k+1$ with $\omega(G^{(2k-1)})\geq 2^{2k}$.
\end{theorem}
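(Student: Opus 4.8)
The plan is to exhibit $G$ explicitly and then verify the three things it must satisfy: planarity, odd-girth exactly $2k+1$, and the existence, for every pair among a prescribed set $S$ of $2^{2k}$ vertices, of a walk of length \emph{exactly} $2k-1$ joining them. The guiding principle is an elementary fact about walks that I would isolate first as a preliminary lemma: in a graph of odd-girth $2k+1$, two vertices $x,y$ are joined by a walk of length $2k-1$ as soon as either $d(x,y)\le 2k-1$ with $d(x,y)$ odd, or $x$ and $y$ lie together on a common $(2k+1)$-cycle — in the latter case the odd arc between them has length at most $2k-1$, and any missing even amount is absorbed by bouncing back and forth along an incident edge (which is available unless $G$ is a bare path). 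Conversely, since an odd closed walk has length at least $2k+1$, a length-$(2k-1)$ walk between two vertices at even distance forces that distance to be at least $2$ and the two vertices to sit "close to" a $(2k+1)$-cycle; this rigidity is exactly what makes the construction delicate. So the task reduces to producing a planar graph of odd-girth $2k+1$ in which $2^{2k}$ chosen vertices are pairwise either at odd distance $\le 2k-1$ or on a common $(2k+1)$-cycle.

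For the construction, note that $2^{2k}=|V(\mathcal{PC}_{2k})|$ and that a direct computation from the Cayley description shows $\mathcal{PC}_{2k}^{(2k-1)}$ is itself the complete graph $K_{2^{2k}}$ (between any two distinct elements of $\mathbb{Z}_2^{2k}$ one can select generators summing to their difference using an appropriate mix of the $e_i$ and $J$ so that the total number is exactly $2k-1$). This suggests building $G$ as a planar "unfolding" of $\mathcal{PC}_{2k}$: a planar graph with a homomorphism $\varphi\colon G\to\mathcal{PC}_{2k}$ restricting to a bijection of $S$ onto $V(\mathcal{PC}_{2k})$, set up so that each length-$(2k-1)$ walk of $\mathcal{PC}_{2k}$ between $\varphi$-images of $S$ lifts to $G$. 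Concretely, I would proceed by recursion: start from $K_4\cong\mathcal{PC}_2$ for $k=1$, and pass from a planar witness $G_{k-1}$ of odd-girth $2k-1$ carrying a clique of size $2^{2k-2}$ in $G_{k-1}^{(2k-3)}$ to a planar $G_k$ of odd-girth $2k+1$ carrying a clique of size $2^{2k}$ in $G_k^{(2k-1)}$, by replacing each relevant vertex or edge of $G_{k-1}$ by a fixed planar gadget that (i) raises the length of every odd cycle through it by exactly $2$, (ii) quadruples the stock of mutually reachable vertices, and (iii) lengthens the controlling walks by exactly $2$. Planarity is preserved because the gadget is planar and is inserted locally, inside a face incident to the replaced vertex.

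The steps, in order, would be: (1) prove the walk lemma above; (2) define the gadget and the recursion, and check planarity is preserved at each step; (3) verify by induction that the odd-girth of $G_k$ is exactly $2k+1$, which requires showing both that some $(2k+1)$-cycle survives and — the real content — that no shorter odd cycle can be formed by shortcutting through the newly inserted gadgets; (4) track the clique, exhibiting the $2^{2k}$ vertices of $S$ and, for each pair, the explicit length-$(2k-1)$ walk obtained by routing the inductively given walk through the gadgets, with the walk lemma handling the parity bookkeeping. I expect step (3) to be the main obstacle: ruling out short odd cycles through the gadgets while still keeping all $2^{2k}$ vertices pairwise joinable by length-$(2k-1)$ walks are requirements pulling in opposite directions — short odd cycles would make walk-reachability easy but destroy the odd-girth bound — so the gadget must be engineered to thread exactly this needle, which is presumably why the extremal count is precisely $2^{2k}$, matching $\mathcal{PC}_{2k}$.
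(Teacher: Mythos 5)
Your proposal is a plan rather than a proof: the object on which everything depends --- the ``fixed planar gadget'' that is supposed to (i) raise every odd cycle length through it by exactly $2$, (ii) quadruple the number of pairwise walk-adjacent vertices, and (iii) lengthen the controlling walks by exactly $2$ --- is never defined, and you yourself flag the verification of odd-girth in step (3) as the unresolved obstacle. That is precisely where the mathematical content lies, and there is no evidence offered that a local vertex- or edge-replacement gadget with properties (i)--(iii) exists at all; a recursion on $k$ that multiplies the clique by $4$ at each level via a local substitution is a strong structural claim, not a routine step. The motivational observation that $\mathcal{PC}_{2k}^{(2k-1)}$ is complete is correct, but a planar ``unfolding'' of $\mathcal{PC}_{2k}$ carrying all the required length-$(2k-1)$ walks is exactly what has to be built, so invoking it does not advance the argument.

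Your preliminary walk lemma is also too weak to do the bookkeeping you assign to it. The paper's construction (a subdivision of $K_4$ in which all four faces are $(2k+1)$-cycles, followed by repeated duplication of threads, for the \emph{fixed} target girth $2k+1$) must handle pairs $u,v$ lying on two \emph{parallel} subdivided edges: such a pair lies on no common facial $(2k+1)$-cycle and need not be at odd distance. What saves the day there is a parity-plus-length analysis of the four routes $u$--$\{a,b\}$--$\{c,d\}$--$v$, showing that among them there is an odd path of length at most $2k-1$, which is then padded to length exactly $2k-1$. Your lemma's hypotheses (odd distance at most $2k-1$, or a common $(2k+1)$-cycle) do not cover this situation, so even granting a gadget, step (4) as described would not close. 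To repair the proposal you would need to either produce the gadget and prove (i)--(iii) together with the girth preservation, or switch to the paper's strategy: prove the subdivided-$K_4$ lemma (all four faces of length $2k+1$ forces the $(2k-1)$-walk-power to be complete on its $4k$ vertices), and then grow the graph by $2k-2$ successive thread duplications inside faces, reapplying that lemma at each step to absorb the new vertices into the clique until it reaches size $2^{2k}$.
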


To prove the theorem we will in fact construct an example of such a graph. This construction is based on the following local
construction.

\begin{lemma}\label{SubdivisionOfK4}
 Let $G$ be a graph obtained from subdividing edges of $K_4$ such that in a planar 
 embedding of $G$ each of the four faces is a cycle of length $2k+1$. 
 Then $G^{(2k-1)}$ is isomorphic to $K_{4k}$.
\end{lemma}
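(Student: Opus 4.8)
The plan is to set up coordinates on the subdivided $K_4$ and then compute exactly which pairs of vertices are joined by a walk of length $2k-1$. Let $G$ be obtained from $K_4$ with branch vertices $a,b,c,d$; write the six paths as $P_{ab},P_{ac},P_{ad},P_{bc},P_{bd},P_{cd}$. The four face-cycles have length $2k+1$, so writing $\ell_{ab}$ for the number of edges on $P_{ab}$ etc., each of the four sums of three of these six numbers (one sum per face, $\ell_{ab}+\ell_{bc}+\ell_{ac}$, $\ell_{ab}+\ell_{bd}+\ell_{ad}$, $\ell_{ac}+\ell_{cd}+\ell_{ad}$, $\ell_{bc}+\ell_{cd}+\ell_{bd}$) equals $2k+1$; summing any two of these (the ones sharing an edge) and subtracting the other two shows all six path-lengths are equal, and hence each $\ell = \tfrac{2k+1}{3}$ — so actually we should not assume all faces are literally $K_4$-faces but rather note $2k+1$ must be divisible by $3$; in general the six path lengths satisfy only the four face equations, which is enough. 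First I would record the total number of vertices: $|V(G)| = 4 + \sum(\ell_{xy}-1)$, and using $\sum_{\text{faces}} (2k+1) = 4(2k+1)$ counts each of the six paths twice, $2\sum \ell_{xy} = 4(2k+1)$, so $\sum \ell_{xy} = 2(2k+1)$ and $|V(G)| = 4 + 2(2k+1) - 6 = 4k$. That gives the right vertex count for $K_{4k}$; the content is showing every pair is adjacent in $G^{(2k-1)}$.

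The key step is a parity/reachability analysis of walks of length exactly $2k-1$ in $G$. First I would establish the basic walk-length lemma: since $G$ is planar with all faces odd of length $2k+1$, $G$ has odd-girth $2k+1$; moreover for two vertices $x,y$ on $G$, the set of lengths of walks from $x$ to $y$ is an arithmetic-progression-like set — it contains some value $d(x,y)$ and then, because each vertex lies on an odd cycle of length $2k+1$ reachable cheaply, it contains $d(x,y)+2, d(x,y)+4,\dots$ up to (and past) some threshold, PROVIDED we can "park" surplus length $2$ near $x$ by going back and forth along an edge, and surplus length $2k+1 - (\text{something})$ by looping around a face. Concretely: a walk of length $2k-1$ from $x$ to $y$ exists iff there is a path from $x$ to $y$ of length $\le 2k-1$ of the correct parity (namely $\equiv 2k-1 \equiv 1 \pmod 2$), OR one can use a face-cycle to flip parity. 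I would split by where $x$ and $y$ sit relative to the branch vertices and the paths $P_{xy}$, and in each case exhibit an explicit walk of length $2k-1$. The cleanest organizing principle: show (i) for any two branch vertices the distance in $G$ is at most $k$ on at least one of the two internally disjoint paths joining them through the remaining structure, hence walks of both parities of every sufficiently large length exist; (ii) any internal vertex is within distance $< 2k-1$ of everything, with enough slack to adjust parity using the back-and-forth trick on an incident edge and one trip around a face of length $2k+1$.

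The main obstacle I expect is the parity bookkeeping in the "tight" cases — pairs $x,y$ whose natural connecting walk has length $2k$ or $2k+2$ rather than $2k-1$, where one must reroute through a different face to correct parity without overshooting $2k-1$. The crucial inequality to verify is that for every pair $x,y$ there is a walk of length $2k-1$; the danger is a pair at "distance" close to $2k-1$ but of the wrong parity with no cheap parity-fixing cycle available, and the danger in the other direction is two vertices so close that every walk between them of length $2k-1$ would have to traverse an edge $k$-ish times and we must check no shorter obstruction forbids it (it does not, because length $2k-1 \ge 1$ and we can pad by $2$'s once a single walk exists). I would handle this by a uniform argument: pick the shorter of the two $\theta$-paths between the relevant branch vertices and show its length plus a controlled detour lands exactly on $2k-1$; the face-length $2k+1$ and the vertex-count $4k$ make the arithmetic close. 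Finally, to get "isomorphic to $K_{4k}$" rather than merely "contains $K_{4k}$", note $|V(G^{(2k-1)})| = |V(G)| = 4k$ from the count above, and $G^{(2k-1)}$ is loopless since $2k-1$ is odd and the odd-girth of $G$ is $2k+1 > 2k-1$; combined with showing all $\binom{4k}{2}$ pairs adjacent, this forces $G^{(2k-1)} \cong K_{4k}$.
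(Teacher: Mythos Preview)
Your vertex count $|V(G)|=4k$ and the looplessness observation are correct and match the paper. But the core of the lemma---that every pair $u,v$ is joined by a walk of length exactly $2k-1$---is not actually established, and your plan contains a misstep indicating the structure is not yet clear to you. Your linear-algebra claim that the four face equations force all six path-lengths equal is wrong; what they actually force is $\ell_{ab}=\ell_{cd}$, $\ell_{ac}=\ell_{bd}$, $\ell_{ad}=\ell_{bc}$ (opposite edges of $K_4$ receive equal subdivision). This equality of opposite paths is precisely the structural fact the paper exploits, and your retraction (``the six path lengths satisfy only the four face equations, which is enough'') does not recover it.

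The genuine gap is the hard case: $u$ internal to $P_{ab}$ and $v$ internal to $P_{cd}$, so that no single face contains both. Your plan (``pick the shorter $\theta$-path and add a controlled detour'') gives no mechanism to guarantee odd length without exceeding $2k-1$. The paper's argument is concrete: using $\ell_{ac}=\ell_{bd}$ and $\ell_{ab}=\ell_{cd}$, the two $u$--$v$ paths through $a,c$ and through $b,d$ have lengths summing to $2(\ell_{ab}+\ell_{ac})=2(2k+1-\ell_{bc})\le 4k$, so their minimum is at most $2k$; the same holds for the pair of paths through $a,d$ and through $b,c$. A short parity check (casing on whether $\ell_{ab}$ is even or odd) then shows that these two minima have \emph{opposite} parities, so one of them is odd and hence at most $2k-1$; padding with back-and-forth steps along any edge yields the required walk of length exactly $2k-1$. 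This pairing-plus-parity idea is what your proposal is missing.
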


\begin{proof}
Let $a,b,c$ and $d$ be the original vertices of the $K_4$ from which $G$ is constructed. 
For $x, y \in\{a, b, c, d\}$ let $P_{xy}$ be the subdivision of $xy$, and let $t_{xy}$ be the length of this path.
For an internal vertex $w$ of $P_{xy}$, let $P_{xw}$ (or $P_{wx}$) be the part of $P_{xy}$ connecting $w$ to $x$, let $t_{xw}$
be the length of it.

We have

\begin{align}\label{eqn face equal}\nonumber
t_{ab}+t_{bc}+t_{ca} &= t_{ab}+t_{bd}+t_{da}\\ \nonumber
&= t_{ac}+t_{cd}+t_{da}\\ \nonumber
&= t_{bc}+t_{cd} + t_{db}\\  
&= 2k+1.
\end{align}

From equation~\ref{eqn face equal} we have

\begin{align}\label{eqn equal parallel}
t_{xy} = t_{wz} \text{ for  }  \{x,y,w,z\} = \{a,b,c,d\}, 
\end{align}
that is to say the if all four faces are of a same length, then parallel edge of $K_4$ are subdivided the same number of times 
(the parity of the length of faces is not important for this claim, the even case will be used later).

Let $u$ and $v$ be a pair of vertices
of $G$. If they are both vertices of a facial cycle of $G$, then 
there is a walk of length  $2k-1$ connecting them since each facial cycle is of length $2k+1$. 
If there is no facial cycle of $G$ containing both $u$ and $v$, then they are internal vertices (after subdivision) of two distinct
parallel edges of $K_4$, thus we may assume, without loss of generality, that $u$ is 
 a vertex of the path $P_{ab}$ and $v$ is a vertex of the path $P_{cd}$. 
 

  Note that by equation~\ref{eqn equal parallel} we have
  
  \begin{align}\label{eqn break-up} \nonumber
  t_{au} + t_{bu} &= t_{cv} + t_{dv} \\
  &= t_{ab} = t_{cd}. 
  \end{align}

 If $t_{ab}=t_{cd}$ is even (odd respectively), 
 then  $t_{au}$ and  $t_{bu}$ have the same parity (different parities respectively) and 
 $t_{cv}$ and  $t_{dv}$ have the same parity (different parities respectively). 
 Moreover, since $t_{cd}$ is even (odd respectively) and $t_{ac}+t_{cd}+t_{da}=2k+1$, 
 $t_{ac}$ and $t_{ad}$ have different parities (same parity respectively). 
 

 Now one of the paths connecting $u,v$, say $P_{ua}\cup P_{ac}\cup P_{cv}$, is of length $t_{au} + t_{ac} + t_{cv}$, 
 and another path, say $P_{ub}\cup P_{bd}\cup P_{dv}$, is of length $t_{bu} + t_{bd} + t_{dv}$. 
 By (\ref{eqn break-up}) we have $(t_{bu} + t_{bd} + t_{dv})+(t_{au} + t_{ac} + t_{cv})=2(t_{ab}+t_{bd})$, hence 
 $t_{bu} + t_{bd} + t_{dv}$ and $t_{au} + t_{ac} + t_{cv}$ have a same parity. Furthermore, since $P_{ab}\cup P_{ad}\cup P_{bd}$ 
 forms a facial cycle we have $t_{ab}+ t_{ad}+t_{bd}=2k+1$, thus $2(t_{ab}+t_{bd})=4k+2-2t_{bc}\leq 4k$.

%

Hence we have $min\{(t_{au} + t_{ac} + t_{cv}), (t_{bu} + t_{bd} + t_{dv})\} \leq 2k$. 
Similarly, we can show that $min\{(t_{au} + t_{ad} + t_{dv}), (t_{bu} + t_{bc} + t_{cv})\} \leq 2k$.

\medskip

But note that $min\{(t_{au} + t_{ac} + t_{cv}), (t_{bu} + t_{bd} + t_{dv})\} $ and  
$min\{(t_{au} + t_{ad} + t_{dv}), (t_{bu} + t_{bc} + t_{cv})\}$ have different parities irrespective of the parity of $t_{ab}=t_{cd}$. Therefore, there is a walk of length $2k-1$ from $u$ to $v$. 
\end{proof}

 \medskip

\begin{proof}[\textbf{Proof of Theorem~\ref{oddcase}}]
 Consider a $K_4$ on four vertices $a,b,c$ and $d$. Let $G_1$ be a subdivision of this $K_4$ where edges $ab$ and $cd$
 each are subdivided into $2k-1$ edges. Thus $G_1$ is a subdivision of $K_4$ in which all the four faces are cycles of length $2k+1$.
 Hence by Lemma~\ref{SubdivisionOfK4} we have
 
 \begin{align}\nonumber
 \omega (G_1^{(2k-1)})=|V(G_1)|=4k.
 \end{align} 
 
 In the following we build a sequence of graphs $G_i$,$i=1,2,\cdots, 2k-1$,
 such that each $G_{i+1}$, $i\leq 2k-2$, contains $G_i$ as a subgraph, $G_{i+1}$ is planar 
 and of odd-girth $2k+1$ and such that
 $\omega(G_{i+1}^{(2k-1)}) > \omega(G_{i}^{(2k-1)})$. At the final step we will have
 
 \begin{align}\nonumber
 \omega(G_{2k-1}^{(2k-1)})\geq 2^{2k}.
 \end{align}
 
 We start with the following partial construction. Suppose $G_i$ is built and 
 let $P=uv_1v_2\cdots v_rw$ 
 be a maximal thread, that is, a path $P$
 connecting $u$ and $w$ such that all $v_j$'s $(j \in \{1,2,...,r\})$ are of degree 2 in $G_i$ but $u$ and $w$ 
 are of degree at least 3. 
 Furthermore, assume
 that $P$ is either  part of a path of length $2k-1$ connecting $a$ and $b$  or
 part of a path of length $2k-1$ connecting $c$ and $d$.

 Since $P$ is a thread, if we add a new edge $uw$ in  $G_i$, the resulting graph will still be planar. 
 So we add such an edge and
 subdivide it $r$ times to obtain the new thread $P'= uv_1'v_2'\cdots v_r'w$.
 Consider a planar drawing of the graph in which $P$ and $P'$ form a facial cycle of length $2r+2$.
 In the face $PP'$ connect $v_1$ and $v_r'$ by a new edge. 
 Subdivide this new edge $2k-r-1$ times (that is, into $2k-r$ edges, we draw it in dot line), 
 so that each of the facial cycles containing the new thread is of length $2k+1$.

 \begin{figure}[ht]
\begin{center}
\includegraphics[height=8cm]{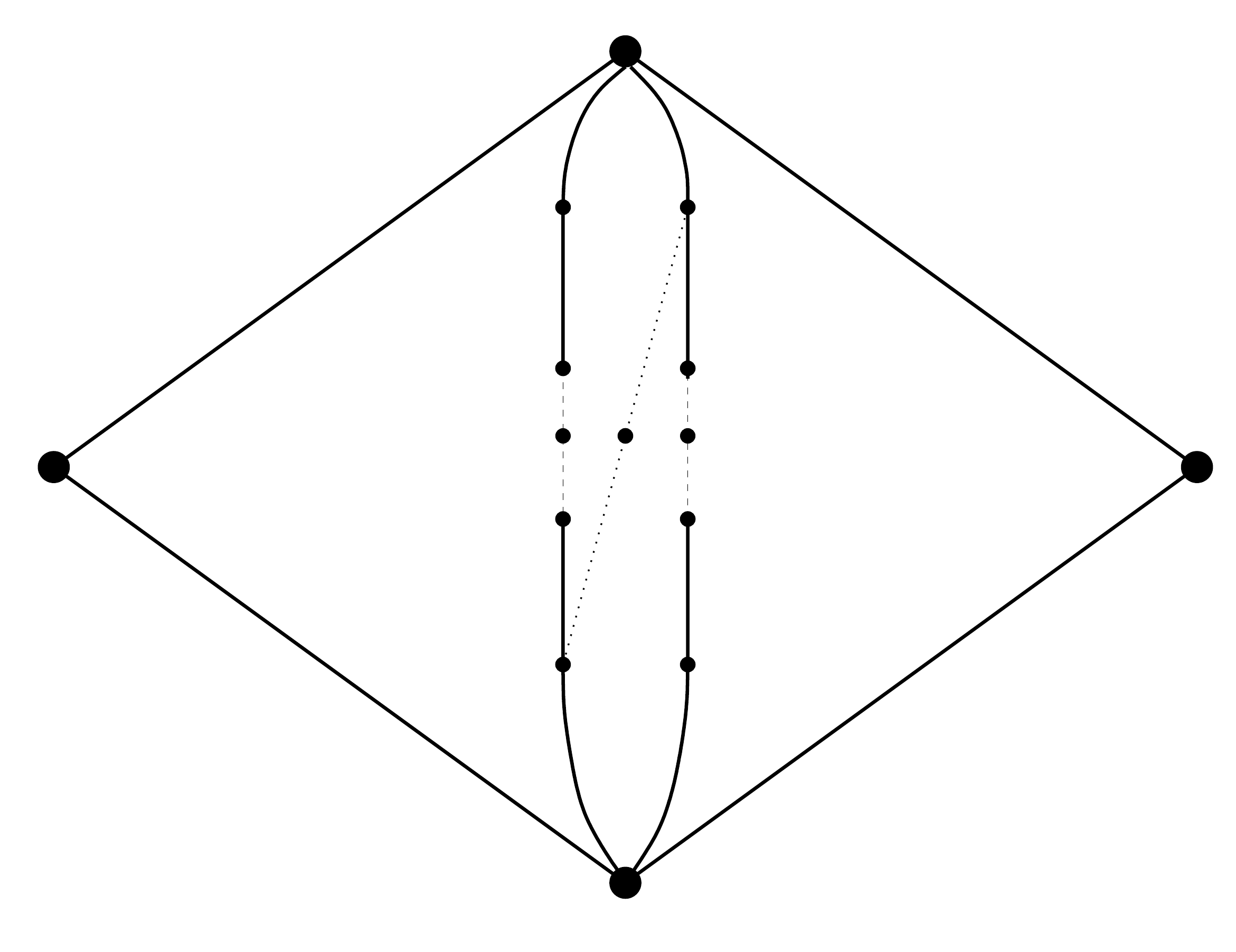}\end{center}
\caption{thread P}\label{threadP}
\end{figure}

 Denote by $G_i'$ the resulting graph.
 We first note that $G_i'$ is also of odd-girth $2k+1$. 
 Now suppose a maximal clique $W$ of $G_i^{(2k-1)}$ contains $v_j$ of the thread $P$. Then we claim that $W \cup {v_j'}$ is also a
 clique of $G_i'^{(2k-1)}$. 
 
 To prove this let $x$ be any vertex of $W$. If $x$ is not in $P$, then consider a  walk of length 
 $2k-1$ from $v_j$ to $x$.
 Each time this walk uses a part of $P$, replace it with the corresponding part from $P'$ and this would give a
 walk of length $2k-1$ connecting $x$ to $v_j'$.

 If $x \in P$, then, without loss of generality, 
 assume
 that $P$ is  part of a path of length $2k-1$ connecting $a$ and $b$.
  Consider the subgraph induced by this path together with $c$, $P'$ and the
 $v_1...v_r'$ thread we added to build $G'_i$.
 This induced subgraph is  a subdivision of $K_4$ in which all the faces are cycles of length $2k+1$. 
 Thus, by Lemma~\ref{SubdivisionOfK4} there is a walk of length $2k-1$ connecting $x$ and $v_j'$. 
 Extending this argument we observe that if all vertices of $P$ are in $W$, then $W\cup \{v_1',v_2', \cdots, v_r'\}$ 
 is a clique in $G_i'^{(2k-1)}$.

 Now we describe our general construction. At first we have $G_1$ on $4k$ vertices and two maximal threads.
 By Lemma~\ref{SubdivisionOfK4} all the vertices of these two threads are parts  of the unique clique
 of order $4k$ in $G_1^{(2k-1)}$.
 We apply the previously mentioned construction on both threads to build $G_2$ which will have four maximal threads each of 
 length $2k-1$ (we are only considering maximal threads that are part of a path of length $2k-1$ connecting $a,b$ or $c,d$, 
 for example the $v_{2k-2}'v_1$-thread drawn in dot line of Figure \ref{threadP} is not considered). 
 There is a clique of order $4k+2(2k-2)$ in $G_2^{(2k-1)}$, and there are four maximal threads of length $2k-2$, 
 each is a part of a path  of length $2k-1$ either connecting $a$ and $b$ or  $c$  and $d$. 
 
  Continuing this construction, in general, there is a clique $W_i$ of $G_i^{(2k-1)}$ ($2\leq i\leq 2k-1$) which is of order
 $4k+\sum_{j=1}^{i-1} 2^j(2k-j-1)$ and there are $2^{i}$ maximal threads of length $2k-i$ which each is a part of a  path of
 length $2k-1$ connecting $a$ to $b$ or $c$ to $d$. 
 
 Note that $G_i$ at each step is a planar graph of odd-girth $2k+1$. The clique $W_{2k-1}$ of $G_{2k-1}^{(2k-1)}$
 has order equal to
 
 \begin{align}\nonumber
 4k+\sum_{j=1}^{2k-2} 2^j(2k-j-1) &=  4k + (2k-1) \sum_{j=1}^{2k-2} 2^j - 2\sum_{j=1}^{2k-2} j2^{j-1}\\ \nonumber
 &= 4k + [(2k-1)(2^{2k-1}-2)] - \\ \nonumber
 &\hspace{.5cm}2[(1 - 2^{2k-1}) - (-1)(2k-1)2^{2k-2}] \\ \nonumber
 &= 4k + (k2^{2k} - 4k - 2^{2k-1} + 2) - \\ \nonumber
 &\hspace{.5cm} (2 - 2^{2k} + k2^{2k} - 2^{2k-1}) \\ \nonumber
 &=2^{2k}.\nonumber
 \end{align}  

 This completes the proof. 
\end{proof}

\begin{corollary}
 Let $B$ be a graph of odd-girth $2k+1$ to which every planar graph of odd-girth 
 $2k+1$ admits a homomorphism. 
 Then $|V(B)|\geq 2^{2k}$. 
 Furthermore, if $B$ is minimal with this property, then $\delta(B)\geq 2k+1$.
\end{corollary}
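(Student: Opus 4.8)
\medskip\noindent\emph{Proof plan.}
The first assertion is essentially immediate from Theorem~\ref{oddcase}, and I would dispatch it as follows. Let $G$ be a planar graph of odd-girth $2k+1$ with $\omega(G^{(2k-1)})\ge 2^{2k}$, as furnished by that theorem. Since $2k-1$ is odd and both $G$ and $B$ have odd-girth $2k+1=(2k-1)+2$, both $G^{(2k-1)}$ and $B^{(2k-1)}$ are loopless. As $B$ bounds all planar graphs of odd-girth $2k+1$ there is a homomorphism $\phi\colon G\to B$, and by the observation preceding Theorem~\ref{oddcase} the same map is a homomorphism $G^{(2k-1)}\to B^{(2k-1)}$. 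A homomorphism of loopless graphs sends the vertices of a clique to the same number of pairwise adjacent, hence pairwise distinct, vertices, so $\omega(B^{(2k-1)})\ge\omega(G^{(2k-1)})\ge 2^{2k}$; since $\omega(B^{(2k-1)})\le|V(B^{(2k-1)})|=|V(B)|$, this gives $|V(B)|\ge 2^{2k}$.

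For the second assertion I would argue by contradiction. Suppose $B$ is minimal but has a vertex $v$ with $\deg_B(v)\le 2k$; the plan is to prove that $B-v$ still bounds the class of planar graphs of odd-girth $2k+1$. Note that $B-v$ is a proper subgraph of $B$ with strictly fewer vertices (as $|V(B)|\ge 2^{2k}\ge 2$), it has odd-girth $\ge 2k+1$, and since $C_{2k+1}$ belongs to the class it in fact has odd-girth exactly $2k+1$; so this would contradict minimality. To show $B-v$ bounds the class, fix a planar graph $H$ of odd-girth $2k+1$ with a homomorphism $\psi\colon H\to B$, and try to modify $\psi$ so that $v$ leaves its image. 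The set $S=\psi^{-1}(v)$ is independent in $H$ since $v$ carries no loop, and for each $s\in S$ the set $T_s:=\psi(N_H(s))$ is contained in $N_B(v)$, so $|T_s|\le\deg_B(v)\le 2k$. If every such $T_s$ has a common neighbour $u_s\neq v$ in $B$, then redefining $\psi(s):=u_s$ for $s\in S$ and leaving $\psi$ unchanged elsewhere produces a homomorphism $H\to B-v$, because no edge of $H$ joins two vertices of the independent set $S$.

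Thus everything reduces to a local claim: whenever $T\subseteq N_B(v)$ with $|T|\le 2k$ occurs as $\psi(N_H(s))$ for some planar $H$ of odd-girth $2k+1$ and some homomorphism $\psi\colon H\to B$, the vertices of $T$ have a common neighbour in $B$ other than $v$. Turning the numerical hypothesis $\deg_B(v)\le 2k$ into this genuine structural impossibility is what I expect to be the main obstacle, since over an arbitrary target a vertex of degree at most $2k$ can be irreplaceable, so planarity and the odd-girth hypothesis must enter crucially. My plan for it is to insert the hypothetical bad configuration — a set $T\subseteq N_B(v)$ of size at most $2k$ whose only common neighbour is $v$ — into a planar graph of odd-girth $2k+1$ assembled from subdivided copies of $K_4$ as in Lemma~\ref{SubdivisionOfK4}, so that the walks of length $2k-1$ guaranteed by that lemma force the vertices of $T$ to be close enough inside $B$ to acquire a second common neighbour; equivalently, to use that $B$, being a minimal bound, admits homomorphisms from planar graphs rich enough (again through Lemma~\ref{SubdivisionOfK4}) that its neighbourhoods cannot be as degenerate as ``$v$ is the unique common neighbour of $T$''. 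Granting the local claim, $\delta(B)\ge 2k+1$ follows; combined with the first part, this shows that $\mathcal{PC}_{2k}$ — which has $2^{2k}$ vertices, is $(2k+1)$-regular, and has odd-girth $2k+1$ — is simultaneously optimal in the number of vertices and in the minimum degree among bounds of odd-girth $2k+1$, provided Conjecture~\ref{PlanarsToProjectiveCubes} holds.
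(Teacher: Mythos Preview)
Your argument for $|V(B)|\ge 2^{2k}$ is correct and is exactly the paper's proof.

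For $\delta(B)\ge 2k+1$, however, your vertex-deletion strategy leaves a genuine gap. Everything hinges on the ``local claim'' that any $T\subseteq N_B(v)$ arising as $\psi(N_H(s))$ has a common neighbour other than $v$, and you do not prove it; the plan to ``insert the bad configuration into a planar graph assembled from subdivided copies of $K_4$'' is not an argument, and it is not clear how Lemma~\ref{SubdivisionOfK4} would force a \emph{second} common neighbour of an arbitrary $T$ inside an otherwise unstructured $B$. Without this, you have not shown that $B-v$ bounds the class, so no contradiction with minimality is obtained.

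The paper avoids this difficulty entirely by working in the other direction: rather than deleting a low-degree vertex from $B$, it builds a planar gadget that \emph{forces} high degree at every vertex of $B$. Concretely, one constructs a planar graph $P_u$ of odd-girth $2k+1$ in which a designated vertex $u$ has $2k+1$ neighbours $x_1,\dots,x_{2k+1}$ that are pairwise adjacent in $P_u^{(2k-1)}$; hence in any homomorphism to $B$ these $x_i$ land on $2k+1$ distinct neighbours of the image of $u$. Minimality of $B$ is then used only to produce a planar graph $G_B$ of odd-girth $2k+1$ whose homomorphisms to $B$ are all onto; gluing a copy of $P_u$ at each vertex $u$ of $G_B$ yields a planar graph $G_B^{*}$ of odd-girth $2k+1$, and any homomorphism $G_B^{*}\to B$ witnesses $\deg_B(v)\ge 2k+1$ for every $v$. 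This is a direct construction in the spirit of the first part, and it sidesteps the structural question about common neighbours in $B$ that your approach would need to resolve.
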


\begin{proof}
 Let $G$ be a graph build in the previous theorem. Since $G$ is of odd-girth $2k+1$, 
 by the assumption, it maps to $B$.  Since $B$ is also of odd-girth $2k+1$, 
 both $B^{(2k-1)}$ and $G^{(2k-1)}$ are simple graphs and $G^{(2k-1)}\to B^{(2k-1)}$. 
 Hence $K_{2^{2k}}\subset B^{(2k-1)}$ which, in particular implies $|V(B)|\geq 2^{2k}$.

 To prove the lower bound on minimum degree, we first introduce the following graph:
 let $P$ be a path of length $2k$ on vertices $x_1, x_2, \cdots, x_{2k+1}$ connected 
 in this order. Let $P'$ be obtained from $P$ by subdividing each edge $2k-2$ 
 times so that $x_i$ is at distance $2k-1$ from $x_{i+1}$. 
 Let $y_1^i,  y_2^i, \cdots y_{2k-2}^i$ be the new vertices subdividing 
 $x_ix_{i+1}$ and connected in this order in $P'$.  We add short
 cut edges so that the shortest odd walk between each $x_i$ and $x_j$ becomes
 of length $2k-1$. These edges are $x_1y_1^2$, $y_1^2y_2^3$, $y_2^3y_3^4, \cdots, 
 y_{2k-2}^{2k}x_{2k+1}$. Now given a vertex $u$, the graph $P_u$ is a graph which is formed 
 from a disjoint copy of $P'$ by connecting $u$ to all $x_i$'s. 
 Note that the graph $P_u$ is of odd-girth $2k+1$ and that in $P_u^{(2k-1)}$ vertices of 
 $P$ (i.e., $x_i$'s) induce a $(2k+1)$-clique. 
 
 Now since $B$ is minimal, there exists a planar graph $G_B$ of odd-girth
 $2k+1$ whose mappings to $B$ are always onto. Let $G_B^{*}$ be a new graph obtained 
 from $G_B$ by adding a $P_u$ for each vertex $u$ of $G_B$. This new graph is also of 
 odd-girth $2k+1$, thus, by the choice of $B$, it maps to $B$. Let $\phi$ be such a 
 mapping of $G_B^{*}$ to $B$. This mapping induces a mapping of $G_B$ to $B$. Thus
 each vertex $v$ of $B$ is image of a vertex $u$ of $G_B$ by the choice of $G_B$. 
 But in the mapping $G_B^{*}$ to $B$, all $x_i$'s of $P_u$ must map to distinct
 vertices all of whom are neighbours of $\phi(u)=v$.
 
\end{proof}

Note that since $PC(2k)$ is a $(2k+1)$-regular graph on $2^{2k}$ vertices, it would be an optimal bound if 
Conjecture~\ref{PlanarsToProjectiveCubes} holds.

\section{Optimal bound for planar signed bipartite graphs}


The development of the notion of homomorphisms for signed graphs has began very recently and, therefore, it is not yet known
if an analogue of Theorem~\ref{NOduality} would hold for the class of signed bipartite graphs. While we believe that would be the 
case, here we prove that $SPC_k$ is the optimal bound for the signed bipartite case of Conjecture~\ref{PlanarsToProjectiveCubes}
if the conjecture holds.

To start, we introduce an analogue notion of walk-power. Let $[G, \Sigma]$ be a signed bipartite graph with $(X,Y)$ being
the partition of vertices. Given an even integer $r\geq 2$ we define $[G, \Sigma]^r$ to be a graph on $V(G)$ where a pair $u, v$
of vertices are adjacent if the following conditions hold: 
\begin{itemize}
 \item $u$ and $v$ are in the same part of $G$;
 \item there are $u,v$-paths $P_1$ and $P_2$, each of length at most $r$, 
 such that one has an odd number of negative edges and the other has even 
 number of them.
\end{itemize}
  Note that the second condition is independent of the choice of a representative
  signature. Furthermore $[G, \Sigma]^r$ is a graph (not signed) with no connection 
  from $X$ to $Y$.

We remark that these two conditions together are to say that: for any choice of an equivalent signature if $u$ and $v$ 
are identified then there would be an unbalanced cycle of even length at most $r$. That can be analogue to the definition
of $G^r$ for odd values of $r$ where odd-girth of $G$ is at least $r+2$, in the following sense: first of all $G$ can be 
regarded as a consistent signed graph $[G, E(G)]$; secondly for any choice of equivalent signature $\Sigma$ of $[G, E(G)]^r$
if identifying pair $u, v$ of vertices results in yet an odd (signed) graph (analogue of 1) but of unbalanced girth at most \
$r$ (analogue of 2), then $u$ and $v$
are adjacent in  $[G, E(G)]^r$. While $[G, E(G)]^r$ could be a proper subgraph of 
$G^r$, the claim and proof of Theorem~\ref{oddcase} can 
be revised with this modified definition.

With the previous remark following lemma is easy to verify.

\begin{lemma}\label{MappingPowerSignedBipartite}
 Let $[G, \Sigma]$ and $[H, \Pi]$ be two signed bipartite graphs and let $\phi$ be a homomorphism of $[G, \Sigma]$ to $[H, \Pi]$.
 Then for any positive even integer $r$, $\phi$ is also a homomorphism of $[G, \Sigma]^r$ to  $[H, \Pi]^r$.
\end{lemma}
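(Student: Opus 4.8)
The plan is to verify Lemma~\ref{MappingPowerSignedBipartite} directly from the definitions, mimicking the easy argument that a graph homomorphism $\phi\colon G\to H$ induces a homomorphism $G^{(k)}\to H^{(k)}$, but carrying along both the bipartition and the parity-of-negative-edges data. First I would fix a signature $\Sigma_1$ equivalent to $\Sigma$ together with the vertex map $\varphi\colon V(G)\to V(H)$ witnessing $[G,\Sigma]\to[H,\Pi]$, so that $\varphi$ maps edges to edges and an edge $xy$ lies in $\Sigma_1$ if and only if $\varphi(x)\varphi(y)$ lies in (a fixed representative $\Pi_1$ of) $\Pi$. Since $[G,\Sigma]$ and $[H,\Pi]$ are bipartite, $\varphi$ respects the bipartitions in the sense that each part of $G$ maps into a single part of $H$ (any homomorphism from a connected bipartite graph does this, and one argues component by component); this handles the first adjacency condition in the definition of $[G,\Sigma]^r$.

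Next I would take an edge $uv$ of $[G,\Sigma]^r$ and show $\varphi(u)\varphi(v)$ is an edge of $[H,\Pi]^r$, provided $\varphi(u)\neq\varphi(v)$ — the real content. By definition there are $u,v$-paths $P_1,P_2$ in $G$, each of length at most $r$, one with an odd and the other with an even number of edges in $\Sigma_1$. Applying $\varphi$ to $P_1$ yields a $\varphi(u),\varphi(v)$-walk $W_1$ in $H$ of the same length, and by the edge/sign compatibility of $\varphi$ the number of edges of $W_1$ lying in $\Pi_1$ has the same parity as the number of $\Sigma_1$-edges on $P_1$ (each negative edge maps to a negative edge and each positive to a positive). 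Since both endpoints lie in the same part of $H$, $W_1$ has even length, so it contains a $\varphi(u),\varphi(v)$-path $Q_1$ obtained by deleting closed subwalks; each deleted closed subwalk is a closed walk in the bipartite graph $H$, hence of even length, and — here one needs that the parity of negative edges on a closed walk in a consistent/bipartite signed graph is determined (balanced, i.e. even, since all unbalanced cycles have even length and... ) — more simply, removing a closed walk changes the length by an even amount and I claim it changes the negative-edge count by an even amount too, because a closed walk decomposes into closed walks each of which, being of even length in a bipartite graph, can be shown to carry an even number of negative edges under the consistency hypothesis. Thus $Q_1$ is a $\varphi(u),\varphi(v)$-path of length at most $r$ with the same negative-edge parity as $P_1$; doing the same with $P_2$ gives $Q_2$ with the opposite parity, so $\varphi(u)\varphi(v)\in E([H,\Pi]^r)$.

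Finally I would dispose of the degenerate case $\varphi(u)=\varphi(v)$: then one of $P_1,P_2$ maps to a closed walk of length at most $r$ at $\varphi(u)$ with an odd number of negative edges, i.e. an unbalanced closed walk of even length $\le r$ in $[H,\Pi]$, contradicting that $\mathrm{unbalanced\text{-}girth}([H,\Pi])$ is no smaller than $\mathrm{unbalanced\text{-}girth}([G,\Sigma])$ combined with the fact that $u\ne v$ were already at "unbalanced distance" at most $r$ in $G$ — so in fact this case cannot occur, which is exactly why the conclusion is an adjacency rather than a loop. Assembling these pieces gives that $\varphi$ is a homomorphism $[G,\Sigma]^r\to[H,\Pi]^r$.

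The main obstacle I anticipate is the bookkeeping in the middle paragraph: turning the image \emph{walks} $W_1,W_2$ back into \emph{paths} of length $\le r$ while controlling the parity of negative edges. The length bound is immediate since deleting a closed subwalk only shortens the walk, but one must be careful that the parity of negative edges is preserved under this reduction; this is where the bipartite (equivalently, consistent) hypothesis on $[H,\Pi]$ is essential, since it forces every closed walk to be balanced, so that excising closed subwalks alters the negative-edge count only by even numbers. Once that observation is in place, the rest is the same routine "push forward along $\varphi$" argument used for ordinary walk-powers, and indeed the author flags it as "easy to verify."
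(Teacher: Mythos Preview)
Your overall strategy---push $P_1,P_2$ forward along $\varphi$ to walks $W_1,W_2$ in $H$ and then shorten these to paths---is the natural one, and it matches what the paper has in mind when it calls the lemma ``easy to verify.'' But the justification you give for the walk-to-path step is wrong: you claim that in a consistent signed bipartite graph every closed walk is balanced, reasoning from ``all unbalanced cycles have even length.'' That is a misreading of consistency. A signed bipartite graph is consistent precisely \emph{because} its unbalanced cycles are all of even length; it most certainly can have unbalanced cycles (otherwise unbalanced-girth would be meaningless), hence unbalanced closed walks. Excising such a subwalk from $W_i$ flips the sign parity, and your reduction collapses.

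In fact, without a girth hypothesis on $[H,\Pi]$ the lemma as literally stated fails. Let $H$ be the positive path $p\,q\,r$ with an unbalanced $4$-cycle $r\,s\,t\,w$ (only $wr$ negative) attached at $r$, and let $G$ consist of two internally disjoint $u,v$-paths of lengths $2$ and $6$, the long one carrying a single negative edge at its last step; the obvious map $\varphi$ wrapping the long path once around the $4$-cycle is a signed homomorphism with $\varphi(u)=p$, $\varphi(v)=r$, and $uv\in E([G,\Sigma]^6)$, yet the only $p,r$-path in $H$ is $p\,q\,r$, so $pr\notin E([H,\Pi]^6)$. What actually makes the reduction go through---and what the paper invokes in the very next sentence after the lemma---is the extra assumption that $[H,\Pi]$ has unbalanced-girth at least $r+2$: then every cycle, hence every closed walk, of length at most $r$ in $H$ is balanced, so excising closed subwalks preserves sign parity and your argument is complete. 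The same hypothesis also cleanly rules out the degenerate case $\varphi(u)=\varphi(v)$, which your present treatment already gestures toward but cannot legitimately use as the lemma is stated.
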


Thus if both graphs are of unbalanced girth at least $r+2$, then $[G, \Sigma]^r$ and $[H, \Pi]^r$ are both loopless, and, therefore,
mapping $\phi$ would imply $\omega ([G, \Sigma]^r) \leq \omega ([H, \Pi]^r)$. Furthermore, assuming that $G$ and $H$ are both
connected, since $\phi$ is also a mapping of $G$ to $H$, it would preserve bipartition. Thus in what follows we will built a signed
bipartite planar graph $[G,\Sigma]$ of unbalanced girth $2k$ such that each part of $G$ contains a clique of size 
$2^{k-2}$ in $[G, \Sigma]^{2k-2}$.

To this end we start with the following lemma which is the signed bipartite analogue of Lemma~\ref{SubdivisionOfK4}.

\begin{lemma} \label{signedSubdivisionOfK4}
 Let $[G,\Sigma]$ be a planar signed graph which is obtained from assigning a signature to a subdivision of $K_4$ in such a way 
 that each of the four facial cycles is an unbalanced cycle of length $2k$. 
 Then $[G,\Sigma]^{(2k-2)}$ is isomorphic to disjoint copies of $K_{(2k-1)}$ each induced on one part of the bipartite graph $G$. 
\end{lemma}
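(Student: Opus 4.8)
The plan is to follow the structure of the proof of Lemma~\ref{SubdivisionOfK4}, but keeping careful track of the signs of edges as well as the lengths of paths. Let $a,b,c,d$ be the branch vertices of the subdivided $K_4$, and for $x,y\in\{a,b,c,d\}$ let $P_{xy}$ denote the subdivision path of the edge $xy$, with $t_{xy}$ its length and $\sigma_{xy}$ the number of negative edges on it (modulo $2$). As in the unsigned case, the hypothesis that all four facial cycles have length $2k$ gives $t_{ab}+t_{bc}+t_{ca}=t_{ab}+t_{bd}+t_{da}=t_{ac}+t_{cd}+t_{da}=t_{bc}+t_{cd}+t_{db}=2k$, hence $t_{xy}=t_{wz}$ whenever $\{x,y,w,z\}=\{a,b,c,d\}$: parallel edges are subdivided equally. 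The unbalanced hypothesis likewise gives, for each facial triangle, that $\sigma_{xy}+\sigma_{yz}+\sigma_{zx}$ is odd. Adding the four such relations in pairs shows $\sigma_{xy}\equiv\sigma_{wz}\pmod 2$ for $\{x,y,w,z\}=\{a,b,c,d\}$, so parallel edges also have equal parity of negative edges. In particular $G$ is bipartite (all cycles, being sums of the facial $2k$-cycles, are even) and the two parts are precisely the vertices at even resp.\ odd distance along the structure; I will first check that a pair $u,v$ can be adjacent in $[G,\Sigma]^{(2k-2)}$ only if they lie in the same part, which is forced by the definition.

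Next I would show each part induces a complete graph $K_{2k-1}$. The total number of vertices of $G$ is $4k$ (six paths, each parallel pair of common length, summing the three distinct lengths to $2k$ after discounting the shared branch vertices — a short count gives $4+\sum(t_{xy}-1)$ over the six edges $=4+2(2k-3)=4k-2$, plus the four branch vertices counted once, which I will pin down precisely), splitting evenly into two parts of $2k-1$ each; so it suffices to prove that every same-part pair $u,v$ is adjacent. For $u,v$ on a common facial cycle $C$ of length $2k$: the two arcs of $C$ between them have lengths summing to $2k$, hence the shorter is $\le k\le 2k-2$ (using $k\ge2$), and since $C$ is unbalanced the two arcs carry negative edges of opposite parity; moreover both arcs have length $\le 2k-2$ unless one arc is trivial, but if $u=v$ we are not in this case, and if the arcs have lengths $1$ and $2k-1$ then $2k-1\le 2k-2$ fails — so here I must instead route one of the two required paths around through a neighbouring face, which is exactly the kind of argument made in Lemma~\ref{SubdivisionOfK4}. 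For $u,v$ not on a common face, they lie on two parallel paths, say $u\in P_{ab}$, $v\in P_{cd}$; then exactly as in the unsigned lemma, equation~(\ref{eqn break-up}) and the facial length identities let me exhibit two $u,v$-walks through the four ``corner'' routes $P_{ua}\cup P_{ac}\cup P_{cv}$, etc., of total length $\le 4k$ over a complementary pair, so each such pair contains a walk of length $\le 2k-2$; and the sign bookkeeping — using that $\sigma_{xy}$ of parallel edges agree and that each facial triangle is odd — shows the two surviving walks differ in the parity of their number of negative edges. Finally I convert walks into the paths $P_1,P_2$ demanded by the definition of $[G,\Sigma]^{r}$ by deleting closed sub-walks, noting this changes length by an even amount and the number of negative edges by an even amount (each deleted closed walk is a cycle-sum, hence even and, being a union of facial cycles an even number of which may be unbalanced... — here I will argue that deleting a closed subwalk preserves the parity of negative edges because the subwalk, as a closed walk, has an even number of negative edges iff it is balanced, and I only need to control parity, so I delete subwalks greedily until lengths drop to $\le 2k-2$).

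The main obstacle I anticipate is precisely the passage from walks of controlled length and sign-parity to \emph{paths} of length at most $r=2k-2$ with the two prescribed sign-parities, together with the boundary cases where the naive ``shorter arc'' has length $2k-1$ rather than $\le 2k-2$. In the unsigned Lemma~\ref{SubdivisionOfK4} the analogous issue (getting a walk of length exactly $2k-1$) is handled by a parity argument showing the two candidate minima have opposite parities; here the parity argument must be run simultaneously on lengths and on signs, and one must verify these two parities are ``independent'' — i.e.\ the four corner-route walks realise all needed (length-parity, sign-parity) combinations. I expect this to come down to the two facts established at the outset (parallel $t_{xy}$ equal, parallel $\sigma_{xy}$ equal, each facial triangle odd), which decouple the length and sign parities cleanly. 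Once that is in place the rest is the same casework as Lemma~\ref{SubdivisionOfK4}, adapted to the even setting, so I would present it compactly by pointing to the corresponding steps there rather than rewriting them in full.
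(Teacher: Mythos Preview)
Your approach is essentially the paper's: same setup, same parallel-edge observations for both $t_{xy}$ and $\sigma_{xy}$, same two-case split (common face vs.\ opposite subdivision paths), and the same ``union covers a facial cycle once plus a doubled segment'' device for the sign parity. Two of your anticipated obstacles are phantom, however, and dissolving them shortens the argument considerably.

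First, in the common-face case there is no boundary issue. Since $u$ and $v$ lie in the same bipartition class, both arcs of the facial $2k$-cycle between them have \emph{even} length, and these lengths sum to $2k$; hence each arc has length between $2$ and $2k-2$. The ``arc of length $2k-1$'' scenario you worry about cannot occur, and the paper disposes of this case in one sentence: the two arcs themselves are the required $P_1,P_2$.

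Second, no walk-to-path conversion is needed. In the opposite-paths case each corner route $P_{ua}\cup P_{ac}\cup P_{cv}$, $P_{ub}\cup P_{bd}\cup P_{dv}$, etc., is already a path in $G$ (three internally disjoint subdivision segments meeting only at branch vertices). Your plan to delete closed subwalks is therefore unnecessary --- and would in fact be delicate, since a closed walk in $[G,\Sigma]$ need not carry an even number of negative edges.

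Finally, there is no ``simultaneous length-and-sign parity'' to manage. Lengths only need to be at most $2k-2$, and the identities $(t_{ua}+t_{ac}+t_{cv})+(t_{ub}+t_{bd}+t_{dv})=4k-2t_{ad}\le 4k-2$ (and its twin with $ad\leftrightarrow ac$) give that directly for the minimum in each complementary pair. Only the \emph{sign} parity of the two selected paths needs an argument, and here the paper's observation is exactly yours: the union of any route from the first pair with any route from the second pair equals a facial cycle plus a doubled segment, hence has odd sign, so the two chosen paths have opposite sign parity regardless of which minimum was selected in each pair.
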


\begin{proof}
We consider a fixed signature $\Sigma$ of  $[G,\Sigma]$. We will use the same notations ($P_{xy}$, $t_{xy}$ etc.) as in
Lemma~\ref{SubdivisionOfK4}. 
Thus as proved in that lemma, parallel edges of $K_4$ are subdivided same number of times. 
Furthermore, repeating the same argument modulo 2, we can conclude that 
the number of negative edges in $P_{xy}$  and the number of negative edges 
in $P_{wz}$ have same parity   for all $\{x,y,w,z\} = \{a,b,c,d\}$.

Let $u$ and $v$ be two vertices from same part of $G$ (thus any  path connecting $u$ and $v$ have even length). 
We would like to prove that they are adjacent in $[G,\Sigma]^{(2k-2)}$.
If they belong to a same facial cycle, then the two paths connecting these two vertices in that (unbalanced) cycle 
satisfy the conditions and we are done. 
Hence, assume without loss of generality that $u \in P_{ab}$ and $v \in P_{cd}$.

Removing the edges of the parallel paths $P_{ad}$ and $P_{bc}$ will result in a cycle of length $4k-2t_{ad}$ containing $u,v$.
This implies:

\begin{align}\label{u-vPath1}
(t_{ua} + t_{ac} + t_{cv}) + (t_{ub} + t_{bd} + t_{dv}) \leq 4k-2 \nonumber \\
\Rightarrow \min \{(t_{ua} + t_{ac} + t_{cv}), (t_{ub} + t_{bd} + t_{dv})\} \leq 2k-2.
\end{align}

Similarly by removing $P_{ac}$ and $P_{bd}$ we get

\begin{align}\label{u-vPath2}
\min\{(t_{ua} + t_{ad} + t_{dv}), (t_{ub} + t_{bc} + t_{cv})\} \leq 2k-2.
\end{align}

It remains to show that the two paths of equation~(\ref{u-vPath1}) and (\ref{u-vPath2}) have different number of negative edges modulo 2. 
To see this note that union of any one of the two paths from~(\ref{u-vPath1}) with a path from~(\ref{u-vPath2}) 
covers a facial 
cycle exactly once and one a part of $P_{ab}$ or $P_{cd}$ twice. 
Since each facial cycle is unbalanced, our claim is proved.
\end{proof}

We are now ready to present our general construction.

\begin{theorem}
 There exists a planar signed bipartite graph $[G,\Sigma]$ such that in $[G,\Sigma]^{(2k-1)}$ each part of $G$ induces a clique
 of size at least $2^{2k-2}$.
\end{theorem}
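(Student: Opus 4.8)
The plan is to mimic the proof of Theorem~\ref{oddcase} almost verbatim, replacing each use of Lemma~\ref{SubdivisionOfK4} with its signed bipartite analogue Lemma~\ref{signedSubdivisionOfK4}, and carrying the bipartition along at every step. The main structural difference is that now a single clique of the power graph lives entirely in one part of $G$, so the two ``base'' threads of the starting graph $G_1$ lie in the same part, and the recursive doubling produces $2^{2k-2}$ thread-internal vertices in that part; by symmetry the other part also gets such a clique. So the target size is $2^{2k-2}$ per part.

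\textbf{Step 1: the base graph.} I would start with a subdivision $G_1$ of $K_4$ on original vertices $a,b,c,d$, subdividing edges $ab$ and $cd$ enough times (and the other four edges appropriately) so that all four facial cycles have length $2k$, and choosing a signature making each facial cycle unbalanced — for instance putting a single negative edge on each of the paths $P_{ab}$ and $P_{cd}$ and none elsewhere, after checking this makes all four faces unbalanced. By Lemma~\ref{signedSubdivisionOfK4}, $[G_1,\Sigma]^{(2k-2)}$ consists of a clique on each part, and the part containing the internal vertices of $P_{ab}$ and $P_{cd}$ has size $|V(G_1)|/2 = 2k-1$ (here $|V(G_1)| = 4k-2$). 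I would verify the vertex count: each of the four subdivided edges of length $t_{bc}=t_{ad}$, etc., contributes internal vertices, and summing gives $4k-2$, split evenly $2k-1$ and $2k-1$.

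\textbf{Step 2: the local doubling move.} Given $G_i$, take a maximal thread $P = uv_1\cdots v_rw$ that is part of an $ab$-path of length $2k-1$ (or $cd$-path of length $2k-1$) — note these are odd-length portions of even paths, which is the natural analogue. Add an edge $uw$, subdivide it $r$ times to get a parallel thread $P'$, then in the face bounded by $P\cup P'$ join $v_1$ to $v_r'$ by a path subdivided so that each new facial cycle has length $2k$; assign signs so these new facial cycles are unbalanced (pushing negative edges around by re-signing if needed). One checks the unbalanced-girth stays $2k$. Then: if a maximal clique $W$ of $[G_i,\Sigma]^{(2k-2)}$ in the relevant part contains $v_j$, then $W\cup\{v_j'\}$ is a clique of $[G_i',\Sigma]^{(2k-2)}$ — for $x\notin P$ one transplants a certifying path from the $P$-side to the $P'$-side preserving length and parity of negative edges; for $x\in P$ one applies Lemma~\ref{signedSubdivisionOfK4} to the sub-subdivision-of-$K_4$ formed by the $ab$-path, a third vertex (say $c$), $P'$ and the added thread. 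Iterating over all $v_j$ gives $W\cup\{v_1',\dots,v_r'\}$ a clique.

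\textbf{Step 3: the global count.} As in Theorem~\ref{oddcase}, apply the move to every eligible maximal thread at each stage. Starting from $G_1$ with two threads (both in the same part, each certified to lie in the clique $W_1$ of size $2k-1$), stage $i$ produces $2^i$ maximal threads of length $2k-i-1$ (they get shorter by the same bookkeeping: the new interior thread from $v_1$ to $v_r'$ has length $2k-r-1$, and $r$ itself decreases), each a part of a length-$(2k-1)$ path between $a,b$ or $c,d$, and enlarges the clique $W_i$ in that part. Running to $i=2k-2$ (when threads have length $1$) and summing $ (2k-1) + \sum_{j=1}^{2k-3} 2^j\cdot(2k-j-2)$ — or the appropriately indexed series — I expect a telescoping identical to the one in the proof of Theorem~\ref{oddcase} (using $\sum j2^{j-1} = 1-2^n+(n)2^{n-1}$ type identities) to collapse to exactly $2^{2k-2}$. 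By the symmetry of the construction under swapping the two parts (the doubling move can be mirrored, or one simply notes $|V(G)|/2$-type balance is preserved), the other part of $G$ also carries a clique of size at least $2^{2k-2}$ in $[G,\Sigma]^{(2k-2)}$, and hence in $[G,\Sigma]^{(2k-1)}$ since raising the walk length only adds edges.

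\textbf{Main obstacle.} The delicate point is bookkeeping the \emph{signs} through the doubling move: I must ensure that at every stage there is a representative signature in which all relevant facial cycles are unbalanced, so that Lemma~\ref{signedSubdivisionOfK4} applies to each newly created subdivided-$K_4$ subgraph, and — more subtly — that when transplanting a certifying path from $P$ to $P'$ the \emph{parity of negative edges} is preserved, which forces $P$ and $P'$ to carry the same number of negative edges mod $2$ (achievable since we control the signs on the freshly added edges). The parity arithmetic guaranteeing that the two candidate short paths in Step 2 differ in negative-edge parity — the exact analogue of the ``different parities irrespective of the parity of $t_{ab}$'' punchline in Lemma~\ref{SubdivisionOfK4} — must be re-derived in the signed setting, but Lemma~\ref{signedSubdivisionOfK4} already packages exactly this, so the real work is just threading its hypotheses through the induction. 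Everything else is a transcription of the unsigned argument with $2k+1 \rightsquigarrow 2k$, $2k-1 \rightsquigarrow 2k-2$, and $4k \rightsquigarrow 2k-1$ per part.
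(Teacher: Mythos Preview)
Your approach is essentially the paper's: start from a signed subdivided $K_4$ with all four faces unbalanced $2k$-cycles, invoke Lemma~\ref{signedSubdivisionOfK4} for the base case, and iterate the thread-doubling move from Theorem~\ref{oddcase} with signs chosen so the transplant argument and Lemma~\ref{signedSubdivisionOfK4} continue to apply. You also correctly flag the sign-bookkeeping through the doubling as the one genuinely new ingredient.

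There is one slip worth fixing. You write that the two base threads lie ``both in the same part'' and later tally the clique growth as $(2k-1)+\sum_{j} 2^{j}(2k-j-2)$, invoking a separate symmetry argument for the other part. In fact the internal vertices of each thread \emph{alternate} between the two parts of the bipartition (the $ab$-path has even length $2k-2$, so its $2k-3$ internal vertices are split $k-1$ versus $k-2$ between the parts, and similarly for the $cd$-path). Consequently, each doubling move adds new vertices to \emph{both} parts simultaneously, and only about half of the $r$ new internal vertices of a copied thread land in a given part. The correct per-part count is therefore
\[
(2k-1)+\sum_{j=1}^{2k-2} 2^{\,j-1}(2k-j-2)=2^{2k-2},
\]
with $2^{j-1}$ rather than $2^{j}$; your displayed sum overshoots (already for $k=2$ it gives $5$ instead of $4$). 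No separate symmetry step is needed---both parts receive their $2^{2k-2}$-clique from the single construction. A related transcription slip: the $ab$- and $cd$-paths have length $2k-2$, not $2k-1$, as you yourself note in your closing dictionary.
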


\begin{proof}
 Consider a $K_4$ on four vertices $a,b,c$ and $d$. Let $G_1$ be a subdivision of this $K_4$ where edges $ab$ and $cd$
 each are subdivided into $2k-2$ edges. Note that $G_1$ is a connected bipartite graph and let $V_1$ and $V'_1$ be its partite sets.
Let $\Sigma_1$ be the signature  with the new edge
  incident to $a$ (created by the subdivision of $ab$) and the new edge 
 incedent to $c$ (created by the subdivision of $cd$) being negative. 
 Thus the signed bipartite graph 
 $[G_1,\Sigma_1]$ is a subdivision of $K_4$ in which all the four faces are unbalanced cycles of length $2k$.
 Hence by Lemma~\ref{signedSubdivisionOfK4} we 
 know that each of $V_1$ and $V'_1$ induces a clique of order 
 $2k-1$ in 
 $[G_1,\Sigma_1]^{(2k-2)}$.

 In the following we will build a sequence of signed graphs $[G_i,\Sigma_i]$, for $i \in \{1,2,\cdots, 2k-1\}$,
 such that each $[G_{i+1},\Sigma_{i+1}]$, $i\leq 2k-2$, contains $[G_i,\Sigma_i]$ as a subgraph.
 Moreover, the signed graph $[G_{i},\Sigma_{i}]$ is a bipartite planar graph with unbalanced-girth $2k$ and partite sets $V_i$, $V'_i$.
Let us denote the clique number of the graph  induced by $V_i$ (or $V'_i$) from $[G_i,\Sigma_i]^{(2k-2)}$ by $f(i)$ (or $f'(i)$). 
Note that both the functions are strictly increasing and at the final step we will have 
$$f(2k-1),f'(2k-1) \geq 2^{2k-2}.$$

 We start with the following partial construction. Suppose $[G_i,\Sigma_i]$ is built and 
 let $P=uv_1v_2\cdots v_rw$ 
 be a maximal thread. 
 Furthermore, assume that $P$ is either  part of a path of length $2k-2$ connecting $a$ and $b$  or
 part of a path of length $2k-2$ connecting $c$ and $d$.

 Since $P$ is a thread, if we add a new edge $uw$ in  $[G_i,\Sigma_i]$, the resulting graph will still be planar. 
 So we add such an edge and subdivide it $r$ times to obtain the new thread $P'= uv_1'v_2'\cdots v_r'w$.
 Also we assign signs of the new edges in such a way that the edges  $uv_1'$ and $v_rw$ have a same sign,
 the edges $v_r'w$ and $uv_1$ have a same sign and the edges $v_i'v_{i+1}'$ and  $v_{r-i+1}v_{r-i}$ have a same sign.
 
%
 
 Consider a planar drawing of the graph in which $P$ and $P'$ form a facial cycle of length $2r$.
 In the face $PP'$ connect $v_1$ and $v_r'$ by a new edge. 
 Subdivide this new edge $2k-r-2$ times (that is, into $2k-r-1$ edges, we color them green), so that each of the facial cycles
 containing the new thread is of length $2k$. Choose signs of the edges of this new path in such a way that
 each of the facial cycles containing the new thread is unbalanced. 
 
 Let $[G_i', \Sigma_i']$ be the resulting signed graph.
 We first note that $[G_i',\Sigma_i']$ is also planar bipartite of unbalanced-girth $2k$.
Now suppose that the vertices of $P$, indexed by odd (or even) numbers  are all part of a maximal clique in the graph induced by one partite set of $G_i$ 
in  $[G_i,\Sigma_i]^{(2k-2)}$. Then the vertices of $P \cup P'$, 
indexed by odd (or even) numbers are all part of a maximal clique in the graph induced by the 
corresponding  partite set of $G'_i$ 
in  $[G'_i,\Sigma'_i]^{(2k-2)}$. 
This can be proved by similar logic used in the proof of Theorem~\ref{oddcase}.  
The only difference is that to prove the above claim one needs to repeat the argument
based on the parity of number of negative edges instead of the parity of number of edges and use Lemma~\ref{signedSubdivisionOfK4} instead of 
Lemma~\ref{SubdivisionOfK4}.

   Now we describe our general construction. 
   At first we have $[G_1,\Sigma_1]$ on $4k-2$ vertices and two maximal threads.
 By Lemma~\ref{signedSubdivisionOfK4} the vertices, indexed by  numbers with the same parity, 
 of these two threads
 are parts of the unique clique 
 of order $2k-1$ in their respective components in $[G_1,\Sigma_1]^{(2k-2)}$.
 We apply the previously mentioned construction on both the threads to build $[G_2,\Sigma_2]$ 
 which will have four maximal threads each of length $2k-3$ (we are only
 considering maximal threads that are part of a path of length $2k-2$ connecting $a,b$ or $c,d$, 
 the green threads are not considered). 
 There are two disjoint  cliques, each of order $(2k-1)+(2k-3)$ in $[G_2,\Sigma_2]^{(2k-2)}$, that is, $f(2)=f'(2) = (2k-1)+(2k-3)$, and there are four maximal threads of length $2k-3$, each a part of a path
 of length $2k-2$ either connecting $a,b$ or  connecting $c,d$. 
 

  Continuing this construction, in general, $f(i) = f'(i) =(2k-1)+\sum_{j=1}^{i-1} 2^{j-1}(2k-j-2)$ 
    and there are $2^{i}$ maximal threads of length $2k-i-1$ which are part of a  path of
 length $2k-2$ connecting $a,b$ or connecting $c,d$. 
 
 Note that $[G_i,\Sigma_i]$ at each step is a planar bipartite signed graph of unbalanced-girth $2k$. 
 Therefore
 
 \begin{align}\nonumber
 f(2k-1) = f'(2k-1) &= 2k-1 +\sum_{j=1}^{2k-2} 2^{j-1}(2k-j-2) =  2k-1 + (k-1) \sum_{j=1}^{2k-2} 2^j - \sum_{j=1}^{2k-2} j2^{j-1}\\ \nonumber
 &= 2k-1 + [(k-1)(2^{2k-1}-2)] - [(1 - 2^{2k-1}) - (-1)(2k-1)2^{2k-2}] \\ \nonumber
 &= 2k-1 + [k2^{2k-1} - 2k - 2^{2k-1} + 2] - [1 - 2^{2k-1} + k2^{2k-1} - 2^{2k-2}] \\ \nonumber
 &=2^{2k-2}.\nonumber 
 \end{align}  

 This completes the proof.
\end{proof}

\begin{corollary}
If $[B, \Pi]$ is a minimal signed bipartite graph of unbalanced-girth $2k$ 
to which every planar signed bipartite graph of unbalanced-girth $2k$ admits 
a homomorphism, then $|V(B)|\geq 2^{2k-1}$ and $\delta(B) \geq 2k$.
\end{corollary}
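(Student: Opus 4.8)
The plan is to follow, step by step, the proof of the corresponding corollary for planar odd signed graphs in the previous section, using the theorem just proved in place of Theorem~\ref{oddcase}, Lemma~\ref{MappingPowerSignedBipartite} in place of the analogous statement for ordinary walk-powers, and Lemma~\ref{signedSubdivisionOfK4} in place of Lemma~\ref{SubdivisionOfK4}. Throughout I take $k\geq 2$.

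\emph{The vertex bound.} Let $[G,\Sigma]$ be the planar signed bipartite graph of unbalanced-girth $2k$ produced by the theorem, with parts $V$ and $V'$, each of which induces a clique of size $2^{2k-2}$ in $[G,\Sigma]^{(2k-2)}$. By hypothesis there is a homomorphism $\phi\colon [G,\Sigma]\to[B,\Pi]$. Since $G$ is connected and bipartite and $\phi$ is in particular a graph homomorphism into the bipartite graph $B$, the sets $V$ and $V'$ are carried into the two distinct parts of $B$ (one into each). By Lemma~\ref{MappingPowerSignedBipartite}, $\phi$ is also a homomorphism $[G,\Sigma]^{(2k-2)}\to[B,\Pi]^{(2k-2)}$; as both signed graphs have unbalanced-girth $2k=(2k-2)+2$, both power graphs are loopless, so $\phi$ is injective on each of the two cliques. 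Hence each part of $B$ contains at least $2^{2k-2}$ vertices and therefore $|V(B)|\geq 2^{2k-1}$.

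\emph{The minimum-degree bound.} As in the odd case, minimality of $[B,\Pi]$ first yields a planar signed bipartite graph $G_B$ of unbalanced-girth $2k$ all of whose homomorphisms to $B$ are onto: for each vertex $w$ of $B$ the graph $B-w$ is not a bound, so there is a planar signed bipartite graph $H_w$ of unbalanced-girth $2k$ with $H_w\not\to B-w$ while $H_w\to B$; taking $G_B$ to be the disjoint union of all the $H_w$ gives the desired graph. Next one builds the signed bipartite analogue $[Q_u,\sigma_u]$ of the gadget $P_u$ used in the odd case: a planar signed bipartite graph of unbalanced-girth $2k$ containing a vertex $u$ joined to $2k$ vertices $x_1,\dots,x_{2k}$, all lying in the part of $Q_u$ not containing $u$, such that $\{x_1,\dots,x_{2k}\}$ induces a clique in $[Q_u,\sigma_u]^{(2k-2)}$. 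As there, one starts from a thread skeleton in which consecutive $x_i$'s are joined by subdivided paths of even length, adds diagonal shortcut edges so that every pair $x_i,x_j$ is joined by two paths of length at most $2k-2$, joins $u$ to every $x_i$, and — this is the only new ingredient — chooses the signature so that the two $x_i$--$x_j$ paths always differ in the parity of their number of negative edges while no cycle of length less than $2k$ becomes unbalanced. The verification is the ``parity of the number of negative edges'' version of the argument in the proof of the theorem, invoking Lemma~\ref{signedSubdivisionOfK4} in place of Lemma~\ref{SubdivisionOfK4}.

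Finally, form $G_B^{*}$ by gluing a fresh copy of $[Q_u,\sigma_u]$ at every vertex $u$ of $G_B$, identifying the distinguished vertex of the gadget with $u$ and choosing the gadget's bipartition compatibly with that of $G_B$; since the pieces pairwise share at most one vertex, $G_B^{*}$ is planar, bipartite and of unbalanced-girth $2k$, hence maps to $B$ by some $\phi$. Restricting $\phi$ to $G_B$ and using the choice of $G_B$, every vertex $v$ of $B$ equals $\phi(u)$ for some $u$. The edges $ux_i$ make $\phi(x_1),\dots,\phi(x_{2k})$ neighbours of $v$, and since $\{x_1,\dots,x_{2k}\}$ is a clique of $[Q_u,\sigma_u]^{(2k-2)}$, hence of the $(2k-2)$-power of the signed supergraph $G_B^{*}$, Lemma~\ref{MappingPowerSignedBipartite} carries it to a clique of the loopless graph $[B,\Pi]^{(2k-2)}$; thus these images are pairwise distinct and $\deg_B(v)\geq 2k$. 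The main obstacle, and the only genuinely new work, is the construction and verification of $[Q_u,\sigma_u]$: one must check simultaneously that it is planar, that adjoining the shortcut edges and the edges from $u$ creates no unbalanced cycle of length below $2k$, and that the signature can be chosen so that every required pair of $x_i$--$x_j$ paths differs in its negative-edge parity. Everything else transcribes the odd-case corollary verbatim.
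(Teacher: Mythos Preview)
Your vertex-bound argument matches the paper's proof essentially verbatim.

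For the minimum-degree bound, however, you take a harder road than necessary. You try to build from scratch a signed bipartite analogue $[Q_u,\sigma_u]$ of the gadget $P_u$ from the odd case, and you correctly identify this construction as ``the main obstacle'' --- indeed, you do not actually carry it out, only assert that a signature with the right parity properties and no short unbalanced cycle can be chosen. As written, that is a gap: the simultaneous requirements (planarity, unbalanced-girth $2k$, and the clique condition on $\{x_1,\dots,x_{2k}\}$) are precisely what needs verification, and you have only restated them.

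The paper sidesteps this entirely. Instead of designing a new gadget, it reuses the graph $[G,\Sigma]$ already built in the preceding theorem: the vertex $a$ (one of the four original $K_4$ vertices) has exactly $2k$ neighbours in the final $[G,\Sigma]$, and these neighbours are pairwise adjacent in $[G,\Sigma]^{(2k-2)}$. Thus $[G,\Sigma]$ itself, with $a$ as the distinguished attachment point, already serves as your $[Q_u,\sigma_u]$ --- no new construction, no new verification. One then glues a copy of $[G,\Sigma]$ at each vertex of $G_B$ (identifying $a$ with that vertex) and finishes exactly as you do. So the ``genuinely new work'' you anticipated is not needed; the theorem's construction already contains the required gadget.
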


\begin{proof}
 Let $[G,\Sigma]$ be the graph built in the previous theorem. 
 Since $[G,\Sigma]$ is of unbalanced-girth $2k$, by the assumption, it maps to 
 $[B, \Pi]$.
 Since $[B, \Pi]$ is also of unbalanced-girth $2k$, both $[B, \Pi]^{(2k-2)}$ and 
 $[G,\Sigma]^{(2k-2)}$ are simple bipartite graphs 
 and $[G,\Sigma]^{(2k-2)}\to [B, \Pi]^{(2k-2)}$. 
 Hence there is a $K_{2^{2k-2}}$ in each part of $[B,\Pi]^{(2k-2)}$ which, 
 in particular implies $|V(B)|\geq 2^{2k-1}$. 
 
 To prove the lower bound on minimum degree, note that since $[B,\Pi]$ is minimal, 
 there exists  a signed bipartite planar graph $[G_B,\Sigma_B]$ of unbalanced-girth 
 $2k$ whose mappings to $[B,\Pi]$ are always onto. 
 Now consider the graph $[G,\Sigma]$ built in the previous theorem. 
  Note that in
 $[G,\Sigma]^{(2k-2)}$  all the $2k$ neighbours of $a$ are adjacent to each other 
 where $a$ is one of the vertices of the $K_4$ that we started our construction with.
  Therefore, in any mapping of $[G,\Sigma]$ to $[B,\Pi]$ image of $a$ must be of 
  degree at least $2k$.

 Now for each vertex $x$ of $[G_B,\Sigma_B]$ add a vertex disjoint copy $[G_x,\Sigma]$
 of $[G,\Sigma]$ and identify the vertex $a$ of
  $[G_x,\Sigma]$ with $x$. 
 Let $[G'_B,\Lambda_B]$ be the new graph. 
 By the construction, $[G'_B,\Lambda_B]$ is also a signed bipartite planar graph of unbalanced-girth $2k$. 
 Hence it maps to $[B,\Pi]$. 
 In any such mapping, by the choice of $[G_B,\Sigma_B]$ and construction of $[G'_B,\Lambda_B]$, each vertex of $[B,\Pi]$ is an image 
 of $a$ in a mapping
 of $[G,\Sigma]$ to $[B,\Pi]$. Thus each vertex of $[B,\Pi]$ has degree at least $2k$.
\end{proof}

\section{Concluding remarks}

P. Seymoure has conjectured in \cite{S75} that the edge-chromatic number of a planar multi-graph is equal to 
it fraction edge-chromatic number. 
It turns out that the restriction of this conjecture for $k$-regular multigraph can be 
proved if and only if Conjecture~\ref{PlanarsToProjectiveCubes} is proved for this 
value of $k=d-1$. 
This special case of Seymour conjecture is proved for $k\leq 8$ in series of work 
using induction and the Four-Color Theorem in \cite{G12} ($k=4,5$), \cite{DKK10} 
($k=6$), \cite{E11} ($k=7$) and \cite{CES12} ($k=8$). Thus 
Conjecture~\ref{PlanarsToProjectiveCubes} is verified for $d\leq 7$. 
Hence we have the following corollary. 

\begin{theorem}
 For $d\leq 7$ the signed graph $SPC_{d}$ is the smallest consistent graph 
 (both in terms of number of vertices and edges) of unbalanced-girth $d+1$
 which bounds all consistent planar signed graphs of unbalanced-girth at
 least $d+1$.  
\end{theorem}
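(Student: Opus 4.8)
The plan is to assemble the final theorem directly from the machinery already in place, treating the two parities of $d$ separately but in parallel. First I would recall that $SPC_d$ itself is a consistent signed graph of unbalanced-girth $d+1$ (Theorem~1.1), so it is a legitimate candidate; what remains is to show that no consistent signed graph of unbalanced-girth $d+1$ that bounds all planar consistent signed graphs of that unbalanced-girth can be strictly smaller, and that $SPC_d$ realizes the bound. The realization is immediate: $SPC_d$ is the signed graph on $\mathcal{PC}_d$, which has $2^d$ vertices and is $(d+1)$-regular, hence has $(d+1)2^{d-1}$ edges; so I only need matching lower bounds on $|V(B)|$, on $\delta(B)$ (which forces the edge count), and the conditional validity of Conjecture~\ref{PlanarsToProjectiveCubes} for $d \le 7$ to know such a $B$ of the claimed size actually exists, namely $SPC_d$.

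Next I would split into cases according to the parity of $d$. For even $d$, write $d = 2k$; a consistent signed graph of unbalanced-girth $d+1 = 2k+1$ is an odd signed graph, so the homomorphism question reduces to ordinary graph homomorphism of graphs of odd-girth $\ge 2k+1$, and the Corollary following Theorem~\ref{oddcase} gives exactly $|V(B)| \ge 2^{2k} = 2^d$ and, for minimal $B$, $\delta(B) \ge 2k+1 = d+1$. For odd $d$, write $d = 2k-1$ so that $d+1 = 2k$; the consistent signed graphs of unbalanced-girth $2k$ are the signed bipartite ones, and the Corollary at the end of Section~3 gives $|V(B)| \ge 2^{2k-1} = 2^d$ and $\delta(B) \ge 2k = d+1$ for minimal such $B$. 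In both cases a minimal bound $B$ has at least $2^d$ vertices each of degree at least $d+1$, hence at least $(d+1)2^{d-1}$ edges, and these two quantities coincide with $|V(SPC_d)|$ and $|E(SPC_d)|$.

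Finally I would invoke the conditional input: by Theorem~1.2 a homomorphism to $SPC_d$ is equivalent to partitioning $E(G)$ into $d+1$ sets each inducing a signature equivalent to $\Sigma$, and by the chain of results cited in the concluding remarks (\cite{G12,DKK10,E11,CES12}, via the equivalence with the regular case of Seymour's conjecture \cite{S75}), Conjecture~\ref{PlanarsToProjectiveCubes} is known to hold for all $d \le 7$. Hence for $d \le 7$ every planar consistent signed graph of unbalanced-girth at least $d+1$ admits a homomorphism to $SPC_d$, so $SPC_d$ \emph{is} a bound; combined with the lower bounds above it is a smallest one both in vertices and in edges. I would close by noting that the ``at least $d+1$'' in the statement is harmless, since a graph of unbalanced-girth strictly greater than $d+1$ also has unbalanced-girth at least $d+1$ and the extremal examples of Theorem~\ref{oddcase} and the Section~3 theorem have unbalanced-girth exactly $d+1$, so no smaller bound can be obtained by enlarging the class.

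The main obstacle is not really a mathematical gap but a bookkeeping one: one must be careful that ``smallest'' is being claimed only among bounds that are themselves consistent of unbalanced-girth exactly $d+1$ (otherwise $K_4$-type collapses or infinite families muddy the optimality claim), and that the minimum-degree lower bound is stated for \emph{minimal} bounds, so the edge-count optimality is really ``the minimal bound has at least as many edges as $SPC_d$, and $SPC_d$ achieves it.'' The only genuine external dependency is the verification of Conjecture~\ref{PlanarsToProjectiveCubes} for $d \le 7$, which is imported wholesale from the edge-colouring literature; I would simply cite it rather than reprove anything.
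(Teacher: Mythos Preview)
Your proposal is correct and follows exactly the approach the paper intends: the theorem is stated there as an immediate corollary of the preceding paragraph, which records that Conjecture~\ref{PlanarsToProjectiveCubes} is verified for $d\le 7$ via the edge-colouring results \cite{G12,DKK10,E11,CES12}, together with the two Corollaries in Sections~2 and~3 giving the lower bounds $|V(B)|\ge 2^d$ and $\delta(B)\ge d+1$. Your parity split and the edge-count deduction from the minimum-degree bound are precisely the intended assembly, just spelled out more explicitly than the paper bothers to.
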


B. Guenin has proposed a strengthening of Conjecture~\ref{PlanarsToProjectiveCubes}
by replacing the condition of planarity with no $(K_5, E(K_5))$-minor.

For further generalization one can consider the following general question:

\begin{problem}
Given $d$ and $r$, $d\geq r$ and $d=r \pmod 2$ what is the optimal bound of unbalanced
girth $r$ which bounds all consistent signed graph of unbalanced-girth $d$ with no  
$(K_n,  E(K_n))$-minor?
\end{problem}

We do not yet know of existence of such a bound in general. 
For $n=3$, consistent signed graphs with no $(K_n,  E(K_n))$-minor are
bipartite graphs with all edges positive, and, therefore, bounded by $K_2$.
For $n=5$ if the input and target graph are both of unbalanced-girth $d+1$, 
then our work and Geunin's extension of Conjecture~\ref{PlanarsToProjectiveCubes}
proposes projective cubes as the optimal solutions. For $d=r=3$, the answer would be
$K_{n-1}$ if Odd Hadwiger conjecture is true. For the case of $n=4$ some partial
answers are given by F. Foucaud and first author. For all other cases there is not
even a conjecture yet.

\bibliographystyle{alpha}
\newcommand{\etalchar}[1]{$^{#1}$}

\end{document}